\renewcommand{\phi}{\varphi}
\renewcommand{\epsilon}{\varepsilon}
\renewcommand{\theta}{\vartheta}
\def\ZZ{{\mathbf Z}}
\def\NN{{\mathbf N}}
\def\FF{{\mathbf F}}
\def\CC{{\mathbf C}}
\def\AAA{{\mathbf A}}
\def\RR{{\mathbf R}}
\def\QQ{{\mathbf Q}}
\def\PP{{\mathbf P}}
\def\cJ{\mathcal{J}}
\def\cO{\mathcal{O}}
\def\fra{\mathfrak{a}}
\def\frb{\mathfrak{b}}
 \DeclareMathOperator{\Spec}{Spec}
\newcommand{\llbracket}{[\negthinspace[}
\newcommand{\rrbracket}{]\negthinspace]}
\newtheorem{lemma}{Lemma}[section]
\newtheorem{theorem}[lemma]{Theorem}
\newtheorem{corollary}[lemma]{Corollary}
\newtheorem{proposition}[lemma]{Proposition}
\newtheorem{conjecture}[lemma]{Conjecture}
\theoremstyle{definition}
\newtheorem{remark}[lemma]{Remark}
\newtheorem{notation}[lemma]{Notation}
\newtheorem{example}[lemma]{Example}
\newtheorem{question}[lemma]{Question}
\theoremstyle{remark}
\newtheorem*{remark*}{Remark}
\newtheorem*{note*}{Note}
\begin{document}

\title{Bernstein-Sato polynomials in positive characteristic}

\thanks{2000\,\emph{Mathematics Subject Classification}.
 Primary 13A35; Secondary 14B05, 32S40.
\newline The author
  was partially supported by NSF grant DMS DMS-0758454  and
  by a Packard Fellowship}
\keywords{Bernstein-Sato polynomials, V-filtration, test ideals}

\author[M. Musta\c{t}\v{a}]{Mircea~Musta\c{t}\v{a}}
\address{Department of Mathematics, University of Michigan,
Ann Arbor, MI 48109, USA} \email{{\tt mmustata@umich.edu}}

\begin{abstract}
In characteristic zero, the Bernstein-Sato polynomial of a
hypersurface can be described as the minimal polynomial of the
action of an Euler operator on a suitable $D$-module. We consider
 analogous $D$-modules in positive characteristic, and use them to
define a sequence of Bernstein-Sato polynomials (corresponding to
the fact that we need to consider also divided powers Euler
operators). We show that the information contained in these
polynomials is equivalent to that given by the $F$-jumping exponents
of the hypersurface, in the sense of Hara and Yoshida \cite{HY}.
\end{abstract}

\maketitle

\markboth{M.~Musta\c t\u a}{Bernstein-Sato polynomials in positive
characteristic}

\section{Introduction}

The goal of this note is to describe a connection between the theory
of generalized test ideals, in the sense of Hara and Yoshida
\cite{HY}, and the theory of $D$-modules. Suppose that $X={\rm
Spec}(R)$ is a nonsingular affine scheme, and that $f\in R$ is a
nonzero regular function on $X$.

Let us describe first the characteristic zero situation, studied by
Malgrange in \cite{Malgrange}. If $\iota\colon X\to X\times \AAA^1$
is the graph of $f$, let $B_f:=\iota_+\cO_X$ denote the $D$-module
theoretic push-forward of the structure sheaf of $X$. This has the
following explicit description as the first local cohomology module
of $X\times\AAA^1$ along the image of $\iota$
\begin{equation}\label{eq0_1}
B_f\simeq H^1_{\iota(X)}\cO_{X\times\AAA^1}\simeq R[t]_{f-t}/R[t].
\end{equation}
 The class of $1/(f-t)$ in $B_f$ is denoted by $\delta$ (this is the
$\delta$-function corresponding to the graph of $f$). Let $D_R$
denote the ring of differential operators on $R$. Malgrange
constructed the $V$-filtration on $B_f$, which is a filtration by
$D_R$-modules such that, informally speaking, $\partial_tt$ is put
in upper-triangular form when passing to the graded module
associated to this filtration. The key ingredient in this
construction is the Bernstein-Sato polynomial $b_f(s)\in\QQ[s]$,
that in this context can be interpreted as the minimal polynomial of
$-\partial_tt$ acting on the $D_R$-module
\begin{equation}\label{eq0_2}
M_f/tM_f,\,\text{where}\,\,M_f:=D_R[\partial_t t]\cdot\delta.
\end{equation}

We mention that a recent result of Budur and Saito \cite{BS} relates
the $V$-filtration to the theory of multiplier ideals as follows.
Recall that for every nonnegative $\lambda$ one defines the
multiplier ideal $\cJ(f^{\lambda})\subseteq \cO_X$, and one gets in
this way a decreasing filtration of $\cO_X$ (see Chap. 9 in
\cite{Laz}). If one considers the embedding $R\hookrightarrow B_f$
given by $h\to h\delta$, then the $V$-filtration induces (up to a
minor renormalization) the filtration on $R$ given by the multiplier
ideals of $f$.

Suppose now that ${\rm char}(R)=p>0$, and let us assume that $R$ is
$F$-finite, that is, the Frobenius morphism on $R$ is finite. In
this case, the ring of differential operators $D_R$ is not finitely
generated over $R$, but it can be written as a union of subrings
$D_R^e$, where $D_R^e={\rm End}_{R^{p^e}}(R)$.

Our main point is that one can define the $D_R$-modules $B_f$ and
$M_f$ also in the positive characteristic setting, and these $D_R$-modules are
related to the generalized test ideals $\tau(f^{\lambda})$ of Hara
and Yoshida \cite{HY}. As in the case of multiplier ideals,
$\lambda$ is a nonnegative real parameter. The generalized test ideals give a
decreasing filtration of $R$, and the exponents where the test
ideals change value are the \emph{F-jumping exponents} of $f$. It
was shown in \cite{BMSm1} that the $F$-jumping exponents of $f$ form
a discrete set of rational numbers. We stress that unlike the
multiplier ideals that are defined via a resolution of
singularities, the test ideals are defined using the action of the
Frobenius morphism on the ring. On the other hand, there are
interesting results and conjectures relating the multiplier ideals
and the test ideals via reduction mod $p$.

Note that in characteristic $p>0$ we have an infinite set of Euler
operators $\theta_{p^i}:=\partial_t^{[p^i]}t^{p^i}$, for $i\geq 0$
(recall that  $\partial_t^{[m]}$ is the differential operator whose
action is given by $\partial_t^{[m]}\bullet t^r= {{r}\choose {m}}
t^{r-m}$). Unlike in characteristic zero, the action of these
operators on $D$-modules is easy to describe.
In fact,  every $D_{R[t]}^e$-module admits 
a decomposition into common eigenspaces for the operators
$\theta_1,\theta_p,\ldots,\theta_{p^{e-1}}$  (the eigenvalues being in $\FF_p$). 
In the case of the module $B_f$, we write down an
$R$-basis of $B_f$ consisting of common eigenvectors. Moreover, the action of
$D_R$, $t$ and $\partial_t^{[p^i]}$ on this basis can be described
explicitly (see Theorem~\ref{thm4_1} for the precise statement).

Instead of only considering the $D_R$-module $M_f/t M_f$, in this
case it is natural to consider separately all modules
\begin{equation}\label{eq0_3}
M_f^e/t M_f^e,
\,\text{where}\,\,M_f^e=D_R^e[\theta_1,\ldots,\theta_{p^{e-1}}]\cdot\delta.
\end{equation}
The corresponding eigenspace decomposition for $B_f$ induces a
decomposition of $M_f^e/tM_f^e$ into common eigenspaces for
$\theta_1,\theta_p,\ldots,\theta_{p^{e-1}}$, each eigenvalue lying
in $\FF_p$.

By analogy with the characteristic zero situation, we define the
Bernstein-Sato polynomial of $f$ to be the minimal polynomial of
$-\theta_1$ acting on $M_f^1/tM_f^1$. This is a product of linear
forms in $\FF_p[s]$, each appearing with multiplicity one. Note that
if $f$ is the reduction mod $p\gg 0$ of a polynomial
$\widetilde{f}\in\ZZ[x_1,\ldots,x_n]$, then $b_f$ divides the reduction mod $p$
of $b_{\widetilde{f}}$ (since $b_{\widetilde{f}}\in\QQ[x]$, the reduction mod $p$ of $b_{\widetilde{f}}$ makes
sense if $p$ is large enough).

In order to also keep track of the higher Euler operators it is more
convenient to consider Bernstein-Sato polynomials with coefficients
in $\QQ$. We put $b_f^{(1)}(s):=\prod_i \left(s-\frac{i}{p}\right)$,
where the product is over those $i\in \{0,1,\ldots,p-1\}$ such that
there is a nonzero eigenvector of $-\theta_1$ in $M_f^1/t M_f^1$
with eigenvalue $\overline{i}\in\FF_p$. More generally, for every
$e\geq 1$ we put
$$b_f^{(e)}(s):=\prod_{i_1,\ldots,i_{e}}\left(s-\left(\frac{i_e}{p}+\ldots+\frac{i_1}{p^e}\right)\right),$$
where the product is over those
$i_1,\ldots,i_e\in\{0,1,\ldots,p-1\}$ such that there is a nonzero
$w\in M_f^e/t M_f^e$ with $(\theta_{p^{\ell-1}} +i_{\ell})w=0$ for
every $1\leq\ell\leq e$. In other words, the Bernstein-Sato
polynomial $b_f^{(e)}$ describes the common eigenvalues of the
operators $\theta_1,\theta_p,\ldots,\theta_{p^{e-1}}$ acting on
$M_f^{e}/t M_f^{e}$.

Our main result says that the information given by the polynomials
$b_f^{(e)}$ is equivalent to that of the $F$-jumping exponents of
$f$. If $\lambda>1$, then $\lambda$ is an $F$-jumping exponent if
and only if $\lambda-1$ has this property, and therefore it is
enough to understand the $F$-jumping exponents in the interval
$(0,1]$ (recall that this is a finite set of rational numbers). In
the next theorem, we denote by $\lceil u\rceil$ the smallest integer
$\geq u$.

\noindent{\bf Theorem}. Let $R$ be a regular $F$-finite ring of
positive characteristic $p$. Consider the $F$-jumping exponents
$\lambda_1,\ldots,\lambda_r$ of $f$ that lie in $(0,1]$. 
Given $e\geq 1$, the rational number $\frac{\lceil p^e\lambda_i\rceil-1}{p^e}$
is a root of the Bernstein-Sato polynomial $b_f^{(e)}$. Moreover, every root of $b_f^{(e)}$ is of this form,
for some $i\in\{1,\ldots,r\}$.

We mention that the first connection between invariants in positive
characteristic and Bernstein-Sato polynomials has been noticed in
\cite{MTW}. With the above notation, the result in \emph{loc. cit.}
can be stated as follows. Suppose that $\widetilde{f}$ is defined over $\ZZ$,
and that $f$ is the reduction mod $p$ of $\widetilde{f}$, for some $p\gg 0$.
If $b_{\widetilde{f}}$ is the Bernstein-Sato polynomial of $\widetilde{f}$, and if $\lambda$
is an $F$-jumping exponent of $f$, then ${\lceil
p^e\lambda\rceil-1}$ is a root of $b_{\widetilde{f}}$ mod $p$. The above theorem
is a first step towards a better understanding of this connection
between Bernstein-Sato polynomials and the generalized test ideals.

\bigskip

The paper is structured as follows. The first two sections are of an
expository nature, reviewing the necessary notions from zero and
positive characteristic. In \S 2 we give an introduction to the
circle of ideas around the $V$-filtration. In particular, we
describe the role of the Bernstein-Sato polynomial in this setting.
In \S 3 we overview the definition of the generalized test ideals
following \cite{BMSm2}. We also discuss the most interesting
results and conjectures about these ideals, concerning their
connection with multiplier ideals via reduction mod $p$. In \S 4 we
show that every $D_{R[t]}^e$-module has a canonical decomposition into
common eigenspaces with respect to the action of the Euler operators
$\theta_1,\theta_p,\ldots,\theta_{p^{e-1}}$.
In \S 5 we turn to the case of the module $B_f$, and we write down 
an explicit basis of common eigenvectors.
 In the last section we
define the Bernstein-Sato polynomials and prove the above theorem.
We end with some questions related to this setup.

\subsection*{Acknowledgement}
I am grateful to Manuel Blickle, Nero Budur, and Morihiko Saito
for many discussions and comments related to this project. I am also indebted to 
Claude Sabbah for suggesting that one should consider the decomposition with respect
to Euler operators for arbitrary $D$-modules in positive characteristic.

\section{Bernstein-Sato polynomials and $V$-filtrations}

We recall in this section, following \cite{Malgrange}, the notion of
$V$-filtration and its connection with the Bernstein-Sato
polynomial. We work over a fixed algebraically closed field $k$ of
characteristic zero. For simplicity, we restrict to the hypersurface
case, though a similar picture is known to hold for ideals of
arbitrary codimension (see \cite{BMSa}).

Let $X$ be a smooth, connected $n$-dimensional variety, and let $H$
be a hypersurface in $X$. Our invariants are local, hence we may and
will assume that $X=\Spec(R)$ is affine and $H$ is defined by
$(f=0)$ for some nonzero $f\in R$. We denote by $D_R$ the ring of
differential operators on $X$ (over $k$), and denote by $P\bullet h$
the action of $P\in D_R$ on $h\in R$. Around every point in $X$ we
can find a principal affine open subset $U=\Spec(R_a)$ such that we
have $x_1,\ldots,x_n\in R_a$ that give an \'{e}tale morphism $U\to
\AAA^n$. If $\partial_1,\ldots,\partial_n\in {\rm Der}_k(R_a)$ are
the corresponding derivations, then $D_{R_a}\simeq (D_R)_a$ is
generated by $R_a$ and $\partial_1,\ldots,\partial_n$.

We now give the definition of the Bernstein-Sato polynomial.
Consider an extra variable $s$, and the free $R_f[s]$-module
generated by the symbol $f^s$. This is, in fact, a left module over
$D_{R_f}[s]$ if we let a derivation $D$ of $R_f$ act by
$$D\cdot f^s=\frac{s D(f)}{f}f^s.$$
It was shown by Bernstein that there is a nonzero $b(s)\in k[s]$ and
$P\in D_R[s]$ (that is, $P$ is a polynomial in $s$ with coefficients
in $D_R$) such that
\begin{equation}\label{bernstein}
b(s)f^s=P\cdot f^{s+1}.
\end{equation}
It is clear that the set of polynomials $b(s)$ for which there is
$P$ satisfying (\ref{bernstein}) is an ideal in $k[s]$. The monic
generator of this ideal is called the \emph{Bernstein-Sato
polynomial} of $f$, and it is denoted by $b_f$.

In (\ref{bernstein}) we have treated $f^s$ as a formal symbol.
However, this equation has the obvious meaning whenever we can make
sense of $f^s$. For example, if $m\in\ZZ$, we can let $s=m$ in
(\ref{bernstein}) and then we get a corresponding equality in $R_f$.

The Bernstein-Sato polynomial is a subtle invariant of the
singularities of the hypersurface $H=(f=0)$. A deep theorem of
Kashiwara \cite{Kashiwara} says that all roots of $b_f$ are negative
rational numbers. In particular, $b_f$ has rational coefficients.
One of the main applications of the $V$-filtration in
\cite{Malgrange} was to relate, when $H$ has isolated singularities,
the roots of $b_f$ with the eigenvalues of the monodromy action on
the Milnor fiber.

We now explain the definition of the $V$-filtration of $f$, and the
connection with the Bernstein-Sato polynomial. Let $\iota\colon
X\hookrightarrow X\times\AAA^1$ be the graph map of $f$, that is
$\iota(x)=(x,f(x))$. We have a left $D$-module on $X\times \AAA^1$
(that is, a left $D_{R[t]}$-module) given as the $D$-module
push-forward of $R$, namely $B_f:=\iota_+R$. This can be explicitly
described as the first cohomology module of $X\times\AAA^1$ along
the graph of $f$
$$B_f\simeq R[t]_{f-t}/R[t].$$
Via this identification, the action of the differential operators on
$B_f$ is induced by the natural action on the localization of
$R[t]$. It is easy to see that if we denote by $\delta$ the class of
$\frac{1}{f-t}$ in $B_f$, then $B_f$ is free over $R$ with a basis
given by
$$\partial_t^m\cdot \delta=\frac{m!}{(f-t)^{m+1}},$$
for $m\geq 0$.

Consider now the $D_R$-module $M_f:=D_R[\partial_t
t]\cdot\delta\subset B_f$. One can show that
$$t M_f=D_R[\partial_t t]\cdot t\delta=
D_R[\partial_t t]\cdot f\delta\subseteq M_f.$$ A key observation is that
(\ref{bernstein}) holds if and only if
\begin{equation}\label{bernstein2}
b(-\partial_t t)\cdot\delta=P(-\partial_t t)f\cdot \delta.
\end{equation}
Indeed, consider the ring homomorphism $\phi\colon D_{R_f}[s]\to
D_{R_f}[\partial_t t]$ given by $\phi(P(s))=P(-\partial_t t)$. This
makes $B_f\otimes_RR_f$ into a $D_{R_f}[s]$-module. We also have a
$D_{R_f}[s]$-linear map $\psi\colon R_f[s]f^s\to B_f\otimes_RR_f$
given by $\psi(Q(s)f^s)=Q(-\partial_t t)\cdot \delta$. To see that $\psi$
is indeed linear with respect to the action of differential
operators, note that if $D$ is a derivation on $R_f$, then
$$\psi(D\cdot f^s)=\psi\left(\frac{sD(f)}{f}f^s\right)=-\partial_t
t\cdot \frac{D(f)}{f}\delta=-D(f)\partial_t\cdot \delta=D\cdot
\delta.$$ Since $\{(\partial_t t)^m\cdot \delta\}_{m\geq 0}$ gives a
basis of $B_f\otimes_RR_f$ over $R_f$, it follows that $\psi$ is
injective. Using also the fact that $B_f\subseteq B_f\otimes_RR_f$,
we deduce that (\ref{bernstein}) is equivalent with
(\ref{bernstein2}).

Moreover, it is easy to see that $b(-\partial_t t)\cdot\delta \in t
M_f$ if and only if $b(-\partial_t t)\cdot M_f\subseteq t M_f$. We
conclude that $b_f$ is the minimal polynomial of the action of
$-\partial_t t$ on $M_f/tM_f$.

\medskip

The $V$-filtration is a decreasing filtration on $B_f$ by finitely generated left
$D_R$-submodules $\{V^{\alpha}\}_{\alpha\in\QQ}$, with the following
properties:
\begin{enumerate}
\item[(i)] $\bigcup_{\alpha\in\QQ}V^{\alpha}=B_f$.

\item[(ii)] The filtration is semicontinuous and discrete in the following
sense: there is a positive integer $\ell$ such that for every
integer $m$ and every $\alpha\in
\left(\frac{m-1}{\ell},\frac{m}{\ell}\right]$ we have
$V^{\alpha}=V^{m/\ell}$.

\item[(iii)]  We have $t\cdot V^{\alpha}\subseteq V^{\alpha+1}$ for every
$\alpha$, with equality if $\alpha>0$.

\item[(iv)] We have $\partial_t\cdot V^{\alpha}\subseteq V^{\alpha-1}$
for every $\alpha$.

\item[(v)] For every $\alpha$, if we put
$V^{>\alpha}:=\bigcup_{\beta>\alpha}V^{\beta}$, then
$(\partial_tt-\alpha)$ is nilpotent on $V^{\alpha}/V^{>\alpha}$.
\end{enumerate}

The key property is (v) above. One can consider the $V$-filtration
as an attempt to put the operator $\partial_tt$ on $B_f$ in upper
triangular form. It is not hard to show that if a filtration as
above exists, then it is unique. Malgrange proved the existence of
such a filtration in \cite{Malgrange}, using only the existence of
the Bernstein-Sato polynomial and the rationality of its roots.

\medskip

There is, in fact, an explicit description of the $V$-filtration in
terms of more general Bernstein-Sato polynomials, due to Sabbah
\cite{Sabbah}. One can show (for example, using the existence of the
$V$-filtration) that for every $w\in B_f$ there is a nonzero
polynomial $b(s)\in\QQ[s]$ and $P\in D_R[s]$ such that
\begin{equation}\label{bernstein3}
b(-\partial_tt)w=P(-\partial_tt)t\cdot w.
\end{equation}
The set of polynomials $b(s)$ for which there is $P$ as above is an
ideal, and its monic generator is called the Bernstein-Sato
polynomial of $f$ associated to $w$, and it is denoted by $b_{f,w}$. Note that we
have $b_f=b_{f,\delta}$. It is a consequence of the existence of the
$V$-filtration that all roots of $b_{f,w}$ are rational. Using this
terminology, Sabbah showed that $V^{\alpha}$ is the subset of $B_f$
consisting of those $w$ such that all roots of $b_{f,w}$ are $\leq
-\alpha$.

\bigskip

We end this section with a result of Budur and Saito relating the
$V$-filtration to the multiplier ideals of $f$. Recall that given
$f$, we can use a log resolution of singularities for the pair
$(X,H)$ to attach to every $\lambda\in\RR_+$ an ideal in $R$ called
the \emph{multiplier ideal} of $f$ of exponent $\lambda$, and
denoted by $\cJ(f^{\lambda})$. We refer to \cite{Laz}, Chap.~9 for
the precise definition and for the basic properties. If
$\lambda>\mu$, then $\cJ(f^{\lambda})\subseteq\cJ(f^{\mu})$.
Moreover, given $\lambda\in\RR_+$, there is $\epsilon>0$ such that
$\cJ(f^{\lambda})=\cJ(f^{\lambda+\epsilon})$. A \emph{jumping
exponent} of $f$ is a positive $\lambda$ such that
$\cJ(f^{\lambda})\varsubsetneq\cJ(f^{\lambda-\epsilon})$ for every
$\epsilon>0$. We make the convention that $0$ is also a jumping
exponent. It follows easily from definition that all jumping
exponents are rational and that they form a discrete subset of
$\RR_+$. Since we consider only principal ideals, we also have
$\cJ(f^{\lambda+1})=f\cdot\cJ(f^{\lambda})$, hence $\lambda$ is a
jumping exponent if and only if $\lambda+1$ is.

Note that we have an embedding $R\hookrightarrow B_f$ given by $h\to
h\delta$. Budur and Saito showed in \cite{BS} that the restriction
to $R$ of the $V$-filtration is, essentially, the filtration of $R$
by multiplier ideals. More precisely, they showed that for every
$\lambda\in\RR_+$ we have $\cJ(f^{\lambda})=V^{\lambda+\epsilon}\cap
R$ for $0<\epsilon\ll 1$. One deduces as an easy consequence of
their statement the following result from \cite{ELSV}: if
$\lambda\in (0,1]$ is a jumping exponent of $f$, then
$b_f(-\lambda)=0$. Note also that in light of Sabbah's description
of the $V$-filtration, the result of Budur and Saito can be
reinterpreted as saying that for $h\in R$ we have
$$\sup\{\alpha\in\RR_+\mid h\in \cJ(f^{\alpha})\}=-\max\{\beta\mid
b_{f,h\delta}(\beta)=0\}.$$

\section{Generalized test ideals}

Hara and Yoshida introduced in \cite{HY} a characteristic $p$
analogue of the multiplier ideals, the (generalized) test ideals. In
this section we recall the definition of these ideals, and their
connection with the multiplier ideals via reduction mod $p$. In
fact, since our ambient variety is nonsingular, we find it more
convenient to work with an equivalent definition from \cite{BMSm2}.
We stick to the hypersurface case, as in the rest of the paper,
though for most results in this section the extension to the case of
arbitrary ideals is verbatim.

We fix a regular domain $R$ of positive characteristic $p$. We
always assume $R$ to be $F$-finite (that is, the Frobenius
homomorphism $F\colon R\to R$ given by $F(u)=u^p$ is finite). Note
that since $R$ is regular, $F$ is also flat, hence $R$ is locally
free over $R^p$. Basic examples are $k[x_1,\ldots,x_n]$ or
$k\llbracket x_1,\ldots,x_n\rrbracket$, where $k$ is a perfect field
(or more generally, such that $[k\colon k^p]$ is finite).

If $J$ is an ideal in $R$ and $e\geq 1$, we denote by $J^{[p^e]}$
the $e^{\rm th}$ Frobenius power of $J$, that is, the ideal generated by the
$p^e$-powers of the elements in $J$
$$J^{[p^e]}=
(u^{p^e}\mid u\in J).$$ If $\frb$ is an arbitrary ideal, then one can
easily deduce from the fact that $R$ is locally free over $R^{p^e}$
that among the ideals $J$ such that $\frb\subseteq J^{[p^e]}$ there
is a unique minimal one, that we denote by $\frb^{[1/p^e]}$.

If $R$ is free over $R^p$, then one can compute $\frb^{[1/p^e]}$ as
follows. Since $R$ is free also over $R^{p^e}$, we can choose a basis
$y_1,\ldots,y_N$ of $R$ over $R^{p^e}$.
Consider generators $h_1,\ldots,h_r$ of $\frb$, and write for every
$i$
$$h_i=\sum_{j=1}^Na_{i,j}^{p^e}y_j,$$
with $a_{i,j}\in R$. With this notation, we have
$\frb^{[1/p^e]}=(a_{i,j}\mid i,j)$.

We now fix a nonzero $f\in R$ and a nonnegative real number
$\lambda$. One can check using the definition that for every $e\geq
1$ we have
$$\left(f^{\lceil \lambda p^e\rceil}\right)^{[1/p^e]}\subseteq
\left(f^{\lceil \lambda p^{e+1}\rceil}\right)^{[1/p^{e+1}]}.$$ Since
$R$ is Noetherian, it follows that for $e\gg 0$ the ideal
$\left(f^{\lceil\lambda p^e\rceil}\right)^{[1/p^e]}$ does not depend
on $e$. This is the (generalized) \emph{test ideal}
$\tau(f^{\lambda})$. It is easy to see that if $\lambda=m/p^e$ for a
nonnegative integer $m$, then $\tau(f^{\lambda})=(f^m)^{[1/p^e]}$
(see, for example, Lemma~2.1 in \cite{BMSm1}).

Note that $\tau(f^0)=R$. It follows from definition that if
$\lambda>\mu$, then $\tau(f^{\lambda})\subseteq\tau(f^{\mu})$. It is
shown in \cite{BMSm2} that for every nonnegative $\lambda$ there is
$\epsilon>0$ such that
$\tau(f^{\lambda})=\tau(f^{\lambda+\epsilon})$. A positive $\lambda$
is called an \emph{F-jumping exponent} if
$\tau(f^{\lambda})\neq\tau(f^{\lambda-\epsilon})$ for every
$\epsilon>0$. We make the convention that $0$ is also an $F$-jumping
exponent.

It is again easy to see from definition that $\tau(f^{\lambda+1})
=f\cdot \tau(f^{\lambda})$, hence $\lambda$ is an $F$-jumping
exponent if and only if $\lambda+1$ is. Other properties are more
subtle: it is shown in \cite{BMSm1} that every $F$-jumping exponent
is rational, and that the set of $F$-jumping exponents is discrete
in $\RR$ (see also \cite{KLZ}).

\begin{remark}\label{interpretation_thresholds}
We mention an interpretation of the $F$-jumping exponents as
$F$-thresholds (see Proposition~2.7 in \cite{MTW} and Corollary~2.30
in \cite{BMSm2}). Let $J$ be an ideal in $R$ such that $f\in {\rm
Rad}(J)$. For every $e\geq 1$, we denote by $\nu^J(p^e)$ the largest
$r\in\NN$ such that $f^r\not\in J^{[p^e]}$ (if there is no such $r$,
then we put $\nu^J(p^e)=0$).

It is easy to see that we have ${\rm sup}_e\frac{\nu^J(p^e)}{p^e}=
\lim_{e\to\infty}\frac{\nu^J(p^e)}{p^e}<\infty$, and this limit is
called the \emph{F-threshold} of $f$ with respect to $J$, and
denoted by $c^J(f)$. One shows that the set of $F$-jumping exponents
of $f$ is equal to the set $\{c^J(f)\mid f\in {\rm Rad}(J)\}$.

We also note that one can show that if $J\neq R$ (in which case
$c^J(f)>0$), then $\nu^J(p^e)=\lceil c^J(f)p^e\rceil -1$ (see
Proposition~1.9 in \cite{MTW}).
\end{remark}

\bigskip

Arguably the most interesting questions in this area involve the
connections between multiplier ideals and test ideals, via reduction
mod $p$. We now state the fundamental result, due to Hara and Yoshida.

Suppose that $R$ is a domain that is smooth over $\ZZ$ (in
particular, it is of finite type over $\ZZ$) such that
$R\otimes_{\ZZ}\QQ\neq 0$. Let $f\in R$ be nonzero. For every prime
$p$ and every ideal $\fra$ in $R$, we denote by $\fra_p$ the image
of $\fra$ in $R_p:=R\otimes_{\ZZ}\FF_p$, where $\FF_p=\ZZ/p \ZZ$. We
take a log resolution of $(R\otimes_{\ZZ}\QQ,f)$, and we choose
$a\in\ZZ$ such that this resolution is defined over $R\otimes_{\ZZ}\ZZ[\frac{1}{a}]$.
If $p\gg 0$, then we may reduce the resolution mod $p$, such that it
gives a log resolution of $(R_p,f_p)$. In fact, since $p\gg 0$ we
may also assume that the push-forward sheaves that come up in the
construction of multiplier ideals commute with base-change over
$\ZZ$ (note that we essentially deal with finitely many ideals).

\begin{theorem}\label{thm_HY}${\rm (}$\cite{HY}${\rm )}$
With the above notation, we have the following:
\begin{enumerate}
\item[i)] If $p\gg 0$, then for every $\lambda\in\RR_+$ we have
$$\tau(f_p^{\lambda})\subseteq\cJ(f^{\lambda})_p.$$
\item[ii)] Moreover, for every $\lambda$, if $p\gg 0$, then
we have equality
$$\tau(f_p^{\lambda})=\cJ(f^{\lambda})_p.$$
\end{enumerate}
\end{theorem}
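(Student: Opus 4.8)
The plan is to relate both sides to a single log resolution and its reduction mod $p$. Fix a log resolution $\mu\colon Y\to \Spec(R\otimes_{\ZZ}\QQ)$ of the pair $(R\otimes_{\ZZ}\QQ,f)$ and spread it out so that it is defined over $R\otimes_{\ZZ}\ZZ[1/a]$; then for $p$ prime to $a$ (and, for the base-change assertions below, $p\gg 0$) it specializes to a log resolution $\mu_p\colon Y_p\to X_p$ of $(R_p,f_p)$ with unchanged numerical data. Write $\mu_p^{*}\mathrm{div}(f_p)=\sum_i a_iE_i=:F_p$, a simple normal crossing divisor, and $K_{Y_p/X_p}=\sum_i b_iE_i$, so that $\cJ(f^{\lambda})_p=\mu_{p*}\cO_{Y_p}(K_{Y_p/X_p}-\lfloor\lambda F_p\rfloor)$; using $\cJ(f^{\lambda+1})=f\cJ(f^{\lambda})$ we may restrict to $\lambda\in(0,1]$, so that only finitely many such sheaves occur and their formation commutes with reduction mod $p\gg 0$. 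On the other side, as recalled above, $\tau(f_p^{\lambda})=(f_p^{\lceil\lambda p^{e}\rceil})^{[1/p^{e}]}$ for $e\gg 0$; on a regular $F$-finite ring $\mathfrak b\mapsto\mathfrak b^{[1/p^{e}]}$ is computed by the iterated Cartier operator, so after twisting by the canonical module $\omega_{X_p}$ we have, for $e\gg 0$,
\[
\tau(f_p^{\lambda})\cdot\omega_{X_p}=C^{e}_{X_p}\bigl(F^{e}_{*}(f_p^{\lceil\lambda p^{e}\rceil}\omega_{X_p})\bigr),
\]
where the Cartier operator $C^{e}_{X_p}\colon F^{e}_{*}\omega_{X_p}\to\omega_{X_p}$ (the $F^{e}_{*}\cO_{X_p}$-linear map dual to Frobenius) is surjective.

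The heart of the matter is the compatibility of the Cartier operator with the proper morphism $\mu_p$ --- Grothendieck duality for the finite flat Frobenius on $X_p$ and $Y_p$, together with $R\mu_{p*}$ --- which produces a commutative square
\[
\begin{CD}
F^{e}_{*}\mu_{p*}(\omega_{Y_p}(-\lceil\lambda p^{e}\rceil F_p)) @>\mu_{p*}C^{e}_{Y_p}>> \mu_{p*}(\omega_{Y_p}(-\lfloor\lambda F_p\rfloor)) \\
@VVV @VVV \\
F^{e}_{*}(f_p^{\lceil\lambda p^{e}\rceil}\omega_{X_p}) @>C^{e}_{X_p}>> \omega_{X_p}
\end{CD}
\]
in which the left vertical arrow is $F^{e}_{*}$ of the trace isomorphism $\mu_{p*}\omega_{Y_p}\xrightarrow{\sim}\omega_{X_p}$ (an isomorphism because $X_p$ is regular), restricted to the displayed subsheaf, which it carries isomorphically onto $f_p^{\lceil\lambda p^{e}\rceil}\omega_{X_p}$ since $f_p\circ\mu_p$ has divisor $F_p$; the right vertical arrow is the trace composed with the inclusion, injective with image $\cJ(f^{\lambda})_p\cdot\omega_{X_p}$; and the relative Cartier operator $C^{e}_{Y_p}$ is determined by a purely local monomial computation on the normal crossing divisor: in coordinates $t_1,\dots,t_n$ with $\omega$ generated by $dt_1\wedge\cdots\wedge dt_n$, the operator $C^{e}$ sends the submodule $t_1^{c_1}\cdots t_n^{c_n}\,\omega$ onto $t_1^{\lceil(c_1+1)/p^{e}\rceil-1}\cdots t_n^{\lceil(c_n+1)/p^{e}\rceil-1}\,\omega$, and one checks that $\lceil(\lceil\lambda p^{e}\rceil a_i+1)/p^{e}\rceil-1=\lfloor\lambda a_i\rfloor$ for $e\gg 0$, whence $C^{e}_{Y_p}$ is a surjection of sheaves onto $\omega_{Y_p}(-\lfloor\lambda F_p\rfloor)$. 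Chasing the square --- the image of $C^{e}_{X_p}$ equals, by the left isomorphism and commutativity, the image of $\mu_{p*}C^{e}_{Y_p}$, and the right vertical arrow pushes that inside $\cJ(f^{\lambda})_p\cdot\omega_{X_p}$ --- gives $\tau(f_p^{\lambda})\subseteq\cJ(f^{\lambda})_p$; as the resolution and the finitely many divisors $\lfloor\lambda F_p\rfloor$ (with $\lambda\in(0,1]$) do not vary with $p$, this holds for all $\lambda\in\RR_+$ and all $p\gg 0$. This proves (i).

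For (ii) I would observe, reading the same square, that $\tau(f_p^{\lambda})=\cJ(f^{\lambda})_p$ is \emph{equivalent} to the assertion that $\mu_{p*}$ carries the sheaf surjection $C^{e}_{Y_p}$ to a surjection; since $C^{e}_{Y_p}$ has a kernel $\mathcal K_{e}$ (a subsheaf of $F^{e}_{*}$ of a line bundle), it suffices that $R^{1}\mu_{p*}\mathcal K_{e}=0$. By a standard, if somewhat involved, reduction through the exact sequences of the inverse Cartier (de Rham) filtration on $F^{e}_{*}\omega_{Y_p}$, this follows from the vanishing of $R^{\ge 1}\mu_{p*}$ of a finite list of coherent sheaves built from $\cO_{Y_p}(K_{Y_p/X_p}-\lfloor\lambda F_p\rfloor)$ and the logarithmic de Rham complex along $\operatorname{Supp}F_p$. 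In characteristic zero these vanishings are guaranteed by the appropriate local vanishing theorems for the birational morphism $\mu$, and --- only finitely many coherent sheaves on the fixed $\ZZ[1/a]$-model being involved (this is where rationality of $\lambda$ enters, keeping the list finite as $p$ varies) --- cohomology commutes with base change for $p\gg 0$, so the vanishings persist mod $p\gg 0$. Together with (i) this yields the equality, now with $p\gg 0$ allowed to depend on $\lambda$.

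The step I expect to be the real obstacle is exactly this vanishing input in (ii): Kawamata--Viehweg vanishing is false in positive characteristic, so it cannot be established directly on $Y_p$; one must funnel the required vanishing through the fixed characteristic-zero model and invoke cohomology-and-base-change, which is precisely what forces the weaker quantifier order ``for every $\lambda$, if $p\gg 0$'' in (ii), in contrast with the uniform ``$p\gg 0$'' in (i). The surrounding bookkeeping --- arranging the Cartier kernel $\mathcal K_{e}$ so that, despite $e\to\infty$, only finitely many line bundles (all reductions of characteristic-zero sheaves) must be controlled, and keeping track of round-ups versus round-downs on the normal crossing divisor --- is the part that needs genuine care.
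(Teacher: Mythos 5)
The paper attributes this result to \cite{HY} and does not prove it; it only sketches the strategy: ``interpreting both the test ideal and the multiplier ideal in terms of local cohomology'' for part (i), and ``the Frobenius action on the de Rham complex, and vanishing theorems in positive characteristic, following Deligne and Illusie \cite{DI}'' for part (ii). So I will compare your argument against that description.

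Your treatment of (i), via the Grothendieck--Serre trace of Frobenius (the Cartier operator) and its compatibility with $\mu_{p*}$ coming from duality for the proper morphism $\mu_p$, is correct, and is the dual formulation of the local-cohomology/Matlis-duality argument in \cite{HY} --- essentially the Blickle--Schwede presentation of generalized test ideals, which has since become standard. The monomial description of $C^e$ and the numerical check that $\lceil(\lceil\lambda p^e\rceil a_i+1)/p^e\rceil-1=\lfloor\lambda a_i\rfloor$ for $e\gg 0$ are both correct, and the diagram chase gives $\tau(f_p^{\lambda})\subseteq\cJ(f^{\lambda})_p$ with the right uniformity in $\lambda$. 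This is a genuinely different (dual) route from the one summarized in the paper, but it buys nothing extra here; the two are interchangeable.

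For (ii) there is a genuine gap. You correctly reduce the equality to $R^1\mu_{p*}\mathcal{K}_e=0$, where $\mathcal{K}_e=\ker C^e_{Y_p}$, but you then assert that this vanishing comes from characteristic-zero local vanishing together with ``cohomology commutes with base change for $p\gg 0$.'' That cannot work as stated: $\mathcal{K}_e$, and more generally the graded pieces of the inverse-Cartier filtration on $F^e_*\omega_{Y_p}$ (the sheaves $B\Omega^{j}$, $Z\Omega^{j}$ of the de Rham complex), are intrinsically positive-characteristic objects --- they are defined via the exterior derivative and Frobenius on $Y_p$ and have no characteristic-zero avatar that specializes to them. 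Base change therefore does not apply to them directly. The missing ingredient is precisely what the paper names by citing \cite{DI}: the Deligne--Illusie decomposition of the (logarithmic) de Rham complex. This requires a lift of $(Y_p,\operatorname{Supp}F_p)$ to $W_2(\FF_p)$ (supplied by the fixed $\ZZ[1/a]$-model) and the dimension bound $\dim Y_p<p$ (hence $p\gg 0$). It is this $W_2$-splitting that turns the short exact sequences you invoke into direct sums and reduces the needed $R^{\geq 1}\mu_{p*}$-vanishing to local vanishing for the Hodge pieces $\Omega^j_{Y}(\log)\otimes L$ --- which \emph{are} reductions of characteristic-zero sheaves and to which base change applies. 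Your phrase ``by a standard, if somewhat involved, reduction through the exact sequences'' glosses over exactly the point where the deep input enters; without it the argument does not close.
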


The first assertion in the above theorem is proved by interpreting
both the test ideal and the multiplier ideal in terms of local
cohomology. The second part is much more subtle, making use of the
Frobenius action on the de Rham complex, and of vanishing theorems
in positive characteristic, following Deligne and Illusie \cite{DI}.

\begin{conjecture}\label{conj}
With the notation in Theorem~\ref{thm_HY}, there are infinitely many
primes $p$ such that for \emph{all} $\lambda\in\RR_+$ we have
$$\tau(f_p^{\lambda})=\cJ(f^{\lambda})_p.$$
\end{conjecture}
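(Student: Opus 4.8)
The plan is to reduce the conjecture to a single uniform statement about Frobenius, and then to feed in the (in general conjectural) fact that a fixed log resolution has infinitely many \emph{ordinary} reductions. Using $\tau(f^{\lambda+1})=f\cdot\tau(f^{\lambda})$ and $\cJ(f^{\lambda+1})=f\cdot\cJ(f^{\lambda})$ it suffices to prove $\tau(f_p^{\lambda})=\cJ(f^{\lambda})_p$ for $\lambda\in(0,1]$. The jumping numbers of $f$ in $(0,1]$ form a finite set $\mathcal M=\{\mu_1<\cdots<\mu_s\}$; applying Theorem~\ref{thm_HY}(ii) to each of the finitely many values $\mu_1,\ldots,\mu_s$ and to $\lambda=0$ produces one bound $p_0$ such that, for every $p\geq p_0$, one has $\tau(f_p^{\mu_j})=\cJ(f^{\mu_j})_p$ for all $j$ and (by Theorem~\ref{thm_HY}(i)) $\tau(f_p^{\lambda})\subseteq\cJ(f^{\lambda})_p$ for all $\lambda$. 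Both $\lambda\mapsto\tau(f_p^{\lambda})$ and $\lambda\mapsto\cJ(f^{\lambda})_p$ are right-continuous and locally constant on $(0,1]$ away from finitely many points (using discreteness of $F$-jumping exponents and of jumping numbers); comparing them on the intervals $[\mu_{j-1},\mu_j)$ shows that for such $p$ the equality $\tau(f_p^{\lambda})=\cJ(f^{\lambda})_p$ holds for \emph{all} $\lambda\in(0,1]$ as soon as $f_p$ has no $F$-jumping exponent in $(0,1]\setminus\mathcal M$. Thus the conjecture reduces to producing infinitely many primes $p$ for which $f_p$ acquires no ``new'' $F$-jumping exponent in $(0,1]$.

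\noindent\textbf{Step 2: a cohomological criterion for the absence of new jumping exponents.}
Fix a log resolution $\pi\colon Y\to X$ of $(X,H)$ defined over $\ZZ[1/a]$, so that $\cJ(f^{\lambda})=\pi_*\cO_Y(K_{Y/X}-\lfloor\lambda\,\pi^*H\rfloor)$. Reducing $\pi$ mod $p\gg 0$ and using the description of the test ideal from \S3 together with the local-cohomology interpretation underlying Theorem~\ref{thm_HY} and Remark~\ref{interpretation_thresholds}, the inclusion $\tau(f_p^{\lambda})\subseteq\cJ(f^{\lambda})_p$ is an equality --- and in particular $f_p$ acquires no $F$-jumping exponent near $\lambda$ --- exactly when a certain Frobenius-semilinear map is surjective: either a Cartier operator on a local cohomology module of $R_p$ supported along $Y_p$, or, dually, the trace of Frobenius on the higher direct images of the twisted sheaves $\pi_*\cO_{Y_p}(K_{Y_p/X_p}-\lfloor\lambda\,\pi^*H\rfloor)$. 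The proof of Theorem~\ref{thm_HY}(ii) establishes this surjectivity for a \emph{fixed} $\lambda$ and $p\gg 0$ by combining the de Rham computation of Hara--Yoshida with the liftability-mod-$p^2$ theorems of Deligne--Illusie \cite{DI}. What the conjecture needs is the \emph{uniform} version: for a single prime $p$, Frobenius should act surjectively --- equivalently, bijectively --- on all of the finitely many twisted cohomology groups attached to the exceptional data of $\pi_p$ at once.

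\noindent\textbf{Step 3: ordinary reduction --- the main obstacle.}
This uniform bijectivity is an \emph{ordinarity} statement: it holds whenever the reduction mod $p$ of $Y$, after passing to a smooth projective compactification and recording the simple-normal-crossings boundary, is ordinary, in the sense that Frobenius acts bijectively on the relevant Hodge--Witt (equivalently slope-$0$ crystalline) cohomology. Granting the conjecture, going back to Serre, that a variety over a number field has ordinary reduction at a density-one set of primes, Step~1 applies to all but finitely many of these primes and the assertion follows (this is essentially the route subsequently pursued by Musta\c{t}\v{a} and Srinivas). In the cases where infinitely many ordinary primes are known unconditionally --- e.g. various classes of curves and surfaces, and other varieties for which the ordinarity conjecture is a theorem --- the argument is unconditional, and in fact yields a positive density of good primes. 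The genuine difficulty is therefore arithmetic rather than algebraic: one must exhibit infinitely many ordinary primes for the specific varieties built from $\pi$, and this is the one step the present method cannot supply in general. (Via the theorem of the last section, and its compatibility with the reduction mod $p$ of the characteristic-zero Bernstein--Sato polynomial, one can also recast the criterion of Step~2 purely in terms of Bernstein--Sato data; whether that reformulation is more tractable is unclear.)
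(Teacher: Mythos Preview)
The statement you are attempting to prove is labeled in the paper as a \emph{conjecture}, not a theorem; the paper offers no proof. It is presented as an open problem, illustrated by Examples~\ref{cusp} and~\ref{homogeneous} and by the remarks following them, which already link the question to Frobenius splitting and to ordinarity of reductions (e.g.\ the elliptic-curve discussion). So there is no ``paper's own proof'' to compare against.

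Your proposal is not a proof either, and you acknowledge this. Step~3 explicitly assumes the open conjecture, going back to Serre, that a smooth projective variety over a number field has infinitely many ordinary reductions; you then correctly observe that this is ``the one step the present method cannot supply in general.'' The reduction you outline in Steps~1--2 --- from Conjecture~\ref{conj} to an ordinarity statement for the cohomology attached to a fixed log resolution --- is the strategy later made precise by Musta\c{t}\u{a} and Srinivas, and it is entirely in the spirit of the paper's remarks (in particular the remark proposing the existence of a number field $K$ such that completely split primes give equality). So your write-up is a reasonable heuristic road map toward the conjecture, aligned with what the paper itself suggests, but it is not a proof and should not be presented as one. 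The honest summary is: the conjecture is open, your Steps~1--2 reduce it to another open arithmetic conjecture, and Step~3 names that conjecture rather than proving it.
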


To illustrate the above behavior, we give two examples.

\begin{example}\label{cusp}
We first treat the case of the cusp $f=x^2+y^3\in\ZZ[x,y]$. Because of the
periodicity properties of both multiplier ideals and test ideals, it
is enough to only consider exponents in $[0,1)$. It follows from the
well-known computation of the multiplier ideals of the cusp in
characteristic zero (see Example 9.2.15 in \cite{Laz}) that if $p\gg
0$, then
\[\cJ(f^{\lambda})_p = \left\{
\begin{array}{cl}
\FF_p[x,y], & {\rm for}\,0 \le \lambda < \frac{5}{6}; \\[2mm]
(x,y), & {\rm for}\,\frac{5}{6} \le \lambda < 1.
\end{array}\right.
\]

On the other hand, we claim that if $p>3$, then
\[\tau(f_p^{\lambda}) = \left\{
\begin{array}{cl}
\FF_p[x,y], & {\rm for}\,0 \le \lambda < c(f_p); \\[2mm]
(x,y), & {\rm for}\,c(f_p) \le \lambda < 1,
\end{array}\right.
\]
where $c(f_p)=\frac{5}{6}$ if $p\equiv 1$ (mod 3), and
$c(f_p)=\frac{5}{6}-\frac{1}{6p}$ if $p\equiv 2$ (mod 3).

Indeed, the fact that $\tau(f_p^{\lambda})=\FF_p[x,y]$ if and only
if $\lambda<c(f_p)$ was shown in \cite{MTW}, Example~4.3. In order
to complete the proof of the claim it is enough to show that
$(x,y)\subseteq \left(f^{p^e-1}\right)^{[1/p^e]}$ for every $e\geq
1$. Indeed, since the origin is the only singular point of $f$, it
follows that if $\lambda<1$, then $\tau(f^{\lambda})\subseteq
(x,y)$. On the other hand, if $\lambda<1$ and $e$ is large enough,
then $\lambda p^e\leq p^e-1$, hence
$$(x,y)\subseteq \left(f^{p^e-1}\right)^{[1/p^e]}\subseteq
\left(f^{\lceil \lambda p^e\rceil}\right)^{[1/p^e]},$$ which implies
that $\tau(f^{\lambda})=(x,y)$.

Note first that for every $0\leq a\leq p^e-1$, the binomial
coefficient ${{p^e-1}\choose a}$ is not zero in $\FF_p$. Indeed,
this follows from the fact that the order of $p$ in $m!$ is
$\sum_{i\geq 1}\lfloor m/p^i\rfloor$, and the fact that for every
$1\leq e'\leq e-1$ we have
$$\lfloor a/p^{e'}\rfloor+\lfloor
(p^e-1-a)/p^{e'}\rfloor-\lfloor (p^e-1)/p^{e'}\rfloor
=p^{e-e'}-\lceil (a+1)/p^{e'}\rceil+\lfloor
a/p^{e'}\rfloor-(p^{e-e'}-1)=0.$$

We now compute $\left(f^{p^e-1}\right)^{[1/p^e]}$ by writing
$f^{p^e-1}$ in the basis of $\FF_p[x,y]$ over $\FF_p[x^{p^e}, y^{p^e}]$
given by $\{x^iy^j\vert 0\leq i,j\leq p^e-1\}$. Since the monomial
$$(x^2)^{\frac{p^e-1}{2}}(y^3)^{\frac{p^e-1}{2}}=y^{p^e}\cdot
x^{p^e-1}y^{\frac{p^e-3}{2}}$$ appears with a nonzero coefficient in
$f^{p^e-1}$, we see that $y\in \left(f^{p^e-1}\right)^{[1/p^e]}$.
Since $(x^2)^{p^e-1}=x^{p^e}\cdot x^{p^e-2}$ appears with
coefficient one in $f^{p^e-1}$, we deduce that
$x\in\left(f^{p^e-1}\right)^{[1/p^e]}$. This completes the proof of
our claim. Note that Conjecture~\ref{conj} is satisfied in this
case.
\end{example}

\begin{example}\label{homogeneous}
We consider the case of a homogeneous polynomial
$f\in\ZZ[x_1,\ldots,x_n]$ of degree $d$ that defines a hypersurface
$Y$ in $\PP_{\ZZ}^{n-1}$. We assume that $Y$ is nonsingular over
$\CC$, that is, $f$ has an isolated singular point at the origin.
Let $p\gg 0$. The usual computation of multiplier ideals using the
blowing up at the origin shows that if $d\leq n$, then
$\cJ(f^{\lambda})_p=\FF_p[x_1,\ldots,x_n]$ for every $\lambda<1$,
and if $d>n$, then
\begin{equation}\label{formula1}
\cJ(f^{\lambda})_p = \left\{
\begin{array}{cl}
\FF_p[x_1,\ldots,x_n], & {\rm for}\,0 \le \lambda < \frac{n}{d}; \\[2mm]
(x_1,\ldots,x_n), & {\rm for}\, \frac{n}{d} \le \lambda <\frac{n+1}{d} ; \\[2mm]
\vdots & \vdots \\[2mm]
(x_1,\ldots,x_n)^{d-n}, & {\rm for}\,\frac{d-1}{d} \le t < 1.
\end{array}\right.
\end{equation}

Consider now an arbitrary prime $p$. We want to describe when
$\tau(f_p^{\lambda})$ is given by the same formula as the reduction
of the multiplier ideals. We again distinguish two cases, acoording
to he value of $d$.

\noindent{\bf Case 1}. Suppose that $d\leq n$. One can show that in
this case we have $\tau(f_p^{\lambda})=\FF_p[x_1,\ldots,x_n]$ for
every $\lambda<1$ if and only if the morphism induced by the
Frobenius
\begin{equation}\label{Frobenius_splitting}
F\colon H^{n-2}(Y_p,\omega_{Y_p})\to
H^{n-2}(Y_p,\omega_{Y_p}^{\otimes p})
\end{equation}
 is injective (recall that $\dim(Y_p)=n-2$, hence $H^{n-2}(Y_p,\omega_{Y_p})\simeq
 \FF_p)$.

\noindent{\bf Case 2}. Suppose now that $d>n$. Since we are only
interested in large values of $p$, we may assume that $p$ does
not divide $d$, and we fix $e$ such that $p^{e}\equiv 1$ (mod $d$).
For $0\leq r\leq d-n$, consider the morphism
$$T_r=f^{\frac{n+r}{d}(p^e-1)}F^e\colon H^{n-1}(\PP^{n-1}_{\FF_p},\cO(-n-r))\to H^{n-1}(\PP^{n-1}_{\FF_p},\cO(-n-r)),$$
where we denote by $F$ the morphism induced on the cohomology of the
projective space by the Frobenius ($T_r$ depends on the choice of
$e$, but if we replace $e$ by $me$, then $T_r$ is replaced by
$T_r^m$). Note that $H^{n-2}(Y_p,\cO(d-n-r)) \subseteq
H^{n-1}(\PP^{n-1}_{\FF_p},\cO(-n-r))$ consists of the elements
annihilated by $f$. In particular, when $r=d-n$, $T_r$ induces a map
from $H^{n-2}(Y_p,\cO_{Y_p})$ to itself, which coincides with the
one induced by the $e^{\rm th}$ iterate of the Frobenius on $Y_p$.

It is easy to see that we always have $\tau(f_p^{\lambda})\subseteq
(x_1,\ldots,x_n)^r$ if $\lambda\in [(n+r-1)/d, (n+r)/d)$. Moreover,
this is an equality for all such $\lambda$ if and only if the above
map $T_r$ is injective (in fact, it is enough to check the
injectivity of this map only on $H^{n-2}(Y_p,\cO(d-n-r))$). In
particular, Conjecture~\ref{conj} for $\lambda\in [(d-1)/d,1)$
predicts that there are infinitely many primes $p$ such that the map
induced by the Frobenius
$$H^{n-2}(Y_p,\cO_{Y_p})\to H^{n-2}(Y_p,\cO_{Y_p})$$
is injective.
\end{example}

\begin{remark}${\rm (}$\cite{MTW}, Example 4.6${\rm )}$
Conjecture~\ref{conj} holds in the case $d<n$ in the above example
by a standard argument. Indeed, in that case $Y$ is a Fano variety,
and it is known that if $p\gg 0$, then $Y_p$ is Frobenius split
(see, for example, Exercise 1.6.E(4) in \cite{BK}). Moreover, $Y_p$
is Frobenius split if and only if the morphism
(\ref{Frobenius_splitting}) is injective.

On the other hand, the case $d=n$ already seems very hard. One case
that is understood is when $d=n=3$ (that is, when $Y$ is an elliptic
curve). We see that in this case we have
$\tau(f_p^{\lambda})=\FF_p[x_1,x_2,x_3]$ for every $\lambda<1$ if
and only if $Y_p$ is ordinary. The behavior when $p$ varies depends
on whether $Y$ has complex multiplication or not. When $Y$ has
complex multiplication, then there is a quadratic field $K$ such
that if $Y_p$ is nonsingular, then $Y_p$ is ordinary if and only if
$p$ is completely split in $K$. On the other hand, if $Y$ does not
have complex multiplication, then by a result of Serre \cite{Serre}
the set of primes $p$ for which $Y_p$ is not ordinary has density
zero (note also that Elkies \cite{Elkies} proved that there are
infinitely many such primes). However, in this case too there is a
number field $K$ such that whenever $p$ is completely split in $K$,
the curve $Y_p$ is ordinary. This follows by taking first a finite
extension $K'$ of $\QQ$ containing all $\ell$-torsion points of $Y$,
where $\ell$ is an odd prime. Then one can show that if $p\neq 2,3,
\ell$ is a prime that is completely split in $K'$, then $Y_p$ is
ordinary (see Exercise 5.11 in \cite{Silverman}). It is enough to
take $K$ a finite extension of $K'$ in which $2$, $3$, and $\ell$
are not completely split.
\end{remark}

\begin{remark}
Motivated by Example~\ref{cusp} and the above remark (see also
\cite{MTW} for other examples) one can ask whether in the context of
Conjecture~\ref{conj} one can always find a number field $K$ such
that whenever $p$ is completely split in $K$, we have
$\tau(f_p^{\lambda})=\cJ(f^{\lambda})_p$. This would give a positive
answer to the conjecture by ${\rm \check{C}ebotarev}$'s density
theorem. The advantage of such a statement is that, in particular,
it would imply that the intersection of two such sets is again
infinite: if $K$ is a finite extension of two number fields $K_1$
and $K_2$, then whenever $p$ is completely split in $K$, it is
completely split also in $K_1$ and $K_2$.
\end{remark}

\section{The action of Euler operators in positive characteristic}

{}From now on we work in the following setup. Let $R$ be an $F$-finite regular
domain of positive characteristic $p$. We
denote by $D_R\subseteq {\rm End}_{\FF_p}(R)$ the ring of (absolute)
differential operators on $R$. In order to avoid the possible
confusion with the product in $D_R$, we denote the action of $P\in
D_R$ on $h\in R$ by $P\bullet h$. Since $R$ is an $F$-finite regular
ring, $D_R$ admits the following description (see \cite{Bli}, Proposition 3.3). For
every $e\geq 0$, let $D_R^{e}={\rm End}_{R^{p^e}}(R)$, in particular
$D_R^{0}=R$. We have $D_R^{e}\subseteq D_R^{e+1}$ and
$$D_R=\bigcup_{e\in\NN}D_R^{e}.$$

We also consider the polynomial ring $R[t]$, which is again a
regular $F$-finite domain. The corresponding rings of differential
operators will be denoted by $D_{R[t]}$ and $D_{R[t]}^e$. For every
$m\geq 1$, the divided power differential operator
$\partial_t^{[m]}$ acts on $R[t]$ by
$$\partial_t^{[m]}\bullet
at^r=a{{r}\choose {m}}t^{r-m}$$ for every $a\in R$ (we follow the
usual convention that ${r\choose m}=0$ if $r<m$). If $e$ is a
nonnegative integer, then
$$D_{R[t]}^e=D_R^e[t,\partial_t^{[m]}\mid m<p^e]$$
(it is enough to consider only those $\partial_t^{[m]}$ with $m$ a
power of $p$).  For $m\geq 1$, we put
$\theta_m:=\partial_t^{[m]}t^m$. In particular, $\theta_1=\partial_tt$ is the
Euler operator that appeared in \S 2.

We will repeatedly use the well-known theorem of Lucas
(see \cite{Lucas}, and also \cite{Granville}): if we 
consider the $p$-adic decompositions of $m$ and $n$, that is, 
$m=\sum_{i=0}^ra_ip^i$ and $n=\sum_{i=0}^rb_ip^i$, where $a_i,
b_i\in\{0,\ldots,p-1\}$, then
$${{m}\choose {n}}\equiv \prod_{i=0}^r{{a_i}\choose {b_i}}\,\,({\rm
mod}\,p).$$ For future reference, we collect in the following lemma
some computations in $D_{R[t]}$. They are standard and at least some of them are well-known,
but we include a proof for the benefit of the reader.

\begin{lemma}\label{lem5_2}
We have the following identities:
\begin{enumerate}
\item[i)] $[t,\theta_m]=-\theta_{m-1}\cdot t$ for every $m\geq 1$
$($with the convention that $\theta_0=1$$)$.
\item[ii)] $[\partial_t^{[p^e]},t^{p^e}]=1$ for every $e\geq 0$.
\item[iii)] $(\partial_t^{[p^e]})^r(t^{p^e})^r=\prod_{j=0}^{r-1}\left(\theta_{p^e}+j\right)$
\item[iv)] $\frac{(sr)!}{(s!)^r}\partial_t^{[sr]}=\left(\partial_t^{[s]}\right)^r$.
\item[v)] For every $i$ and $j$, we have ${{i+j}\choose
{i}}\partial_t^{[i+j]}=\partial_t^{[i]}\partial_t^{[j]}$.
\item[vi)] For every $i$ and $j$, we have $[\theta_i,\theta_j]=0$.
\item[vii)] If $a_0,\ldots,a_e\in\{0,\ldots,p-1\}$, and
$m=\sum_{i=0}^e a_ip^i$, then
$$\theta_m=\prod_{i=0}^e\frac{1}{a_i!}\cdot \prod_{j=0}^{a_i-1}(\theta_{p^i}+j),$$
where if $a_i=0$ the product $\prod_{j=0}^{a_i-1}(\theta_{p^i}+j)$ is understood to be $1$.
\item[viii)] For every $i$ and $j$ we have
\[
[\partial_t^{[p^i]}, \theta_p^j]=\left\{
\begin{array}{cl}
\partial_t^{[p^i]}, & \text{if}\,\,i=j; \\[2mm]
0, & \text{otherwise}.
\end{array}\right.
\]
\end{enumerate}
\end{lemma}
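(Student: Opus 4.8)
The plan is to verify each of the eight identities by direct computation, working in the ring $D_{R[t]}$ and repeatedly exploiting two basic facts: the action formula $\partial_t^{[m]}\bullet at^r = a\binom{r}{m}t^{r-m}$ together with the observation that an element of $D_{R[t]}$ is determined by its action on the monomials $at^r$, and Lucas's theorem for reducing binomial coefficients modulo $p$. I would dispatch the identities in a logical order rather than the order listed, since several depend on earlier ones.

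First I would prove (iv) and (v), which are purely about divided powers of $\partial_t$: both follow from the identity $\binom{sr}{s,s,\dots,s} = (sr)!/(s!)^r$ (resp. $\binom{i+j}{i}$) by evaluating both sides on $t^r$ (resp. $t^n$) and comparing binomial coefficients — this is the ``divided power algebra'' relation and needs no characteristic-$p$ input. Next, (ii) is the special case of (v)-type reasoning: $\partial_t^{[p^e]}t^{p^e}\bullet t^r - t^{p^e}\partial_t^{[p^e]}\bullet t^r = \big(\binom{r+p^e}{p^e} - \binom{r}{p^e}\big)t^r$, and by Lucas this bracket is $1$ in $\FF_p$ for every $r\ge 0$ (writing $r = p^e q + r_0$, the two binomials are $\binom{q+1}{1}$ and $\binom{q}{1}$ modulo $p$, differing by $1$). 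Then (iii) follows by an easy induction on $r$ using (ii): $(\partial_t^{[p^e]})^{r}(t^{p^e})^{r} = (\partial_t^{[p^e]})^{r-1}\big(t^{p^e}\partial_t^{[p^e]} + 1\big)(t^{p^e})^{r-1}\cdot t^{p^e}$ — one commutes the extra $t^{p^e}$ leftward through $r-1$ copies using (ii), picking up the factor $\theta_{p^e} + (r-1)$. For (i), I would compute $[t,\theta_m]\bullet at^r = t\partial_t^{[m]}\bullet at^{m+r} - \partial_t^{[m]}\bullet at^{m+r+1}$, which equals $a\big(\binom{m+r}{m} - \binom{m+r+1}{m}\big)t^{r+1} = -a\binom{m+r}{m-1}t^{r+1} = -\theta_{m-1}t\bullet at^r$ by Pascal's rule (this one is an integer identity, no reduction needed).

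Next come the structural statements. For (vi), I would note that $\theta_i$ and $\theta_j$ both preserve the $R$-span of each monomial $t^r$ — indeed $\theta_m\bullet at^r = a\binom{r}{m}t^r$ — so each $\theta_m$ acts diagonally in the basis $\{t^r\}$, and diagonal operators commute; this is the cleanest argument. Identity (vii) is then the translation of Lucas's theorem into operator language: since $\theta_m\bullet t^r = \binom{r}{m}t^r$, and writing $r = \sum r_i p^i$ in base $p$, Lucas gives $\binom{r}{m} \equiv \prod_i \binom{r_i}{a_i}$; on the other hand, from (iii) we have $\frac{1}{a_i!}\prod_{j=0}^{a_i-1}(\theta_{p^i}+j)\bullet t^r = \frac{1}{a_i!}(\partial_t^{[p^i]})^{a_i}(t^{p^i})^{a_i}\bullet t^r$, and using (iv) this is $\binom{a_i p^i}{p^i,\dots}^{-1}\cdots$ — more directly, $\frac{1}{a_i!}\prod_{j=0}^{a_i-1}(\theta_{p^i}+j)$ acts on $t^r$ as multiplication by $\frac{1}{a_i!}\prod_{j=0}^{a_i-1}\big(\binom{r}{p^i}^{?}\big)$; the cleanest route is to observe $\partial_t^{[a_i p^i]} t^{a_i p^i}$ and $\frac{1}{a_i!}(\partial_t^{[p^i]})^{a_i}(t^{p^i})^{a_i}$ agree by (iv), so the right-hand side of (vii) acts as $\prod_i \binom{r_i}{a_i}$ by the single-digit case, matching $\theta_m$ by Lucas. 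Finally (viii): since $\theta_p$ acts on $t^r$ by $\binom{r}{p}$, which depends only on $r$ through its base-$p$ digits, and $\partial_t^{[p^i]}$ shifts $r \mapsto r - p^i$ affecting only the $i$-th digit (and carries, but one checks $\binom{r}{p}$ modulo $p$ via Lucas depends on the digit $r_1$ only when $i\ne 1$, in which case $r-p^i$ has the same digit $r_1$), the bracket vanishes unless $i = j$... actually the stated identity is for $[\partial_t^{[p^i]},\theta_p^j]$ with exponent $j$ on $\theta_p$, not index; I would handle it by evaluating on $t^r$: $\theta_p^j\bullet t^r = \binom{r}{p}^j t^r$, so the bracket sends $t^r$ to $\big(\binom{r-p^i}{p}^j - \binom{r}{p}^j\big)t^{r-p^i}$ times $\binom{r}{p^i}$, and Lucas shows $\binom{r-p^i}{p} \equiv \binom{r}{p}$ unless $i=1$ (when the $p^1$-digit is altered) — wait, this needs care with the case $i=0$ and carries; the correct reading giving a clean $\partial_t^{[p^i]}$ vs $0$ dichotomy is $i=1$, so I would double-check the indexing against the intended use in later sections.

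The main obstacle is bookkeeping: none of the eight identities is deep, but (vii) and (viii) require careful tracking of base-$p$ digits and carries when applying Lucas's theorem, and it is easy to produce off-by-one errors in the binomial manipulations (e.g., in (i) the Pascal step, or in (viii) determining exactly which index makes the bracket nonzero). The safest strategy throughout is to reduce every identity to an equality of operators by testing against the $R$-basis $\{t^r : r \ge 0\}$ of $R[t]$ (everything is $R$-linear), thereby converting operator identities into scalar identities among binomial coefficients modulo $p$, and then invoking Lucas or Pascal. I would also order the write-up so that (iv),(v) $\Rightarrow$ (ii) $\Rightarrow$ (iii) $\Rightarrow$ (vii), with (i), (vi), (viii) proved independently by the same testing method.
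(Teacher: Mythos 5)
Your overall strategy (test each identity on the monomial basis $\{t^r\}$ of $R[t]$ over $R$, then invoke Pascal or Lucas) is essentially the same as the paper's for parts i), ii), iv), v), vi), viii), and the logical ordering you propose — iv), v) $\Rightarrow$ ii) $\Rightarrow$ iii) $\Rightarrow$ vii) — matches the paper's dependencies. However there is a real error that undermines the two hardest parts, vii) and viii).

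You repeatedly use $\theta_m\bullet t^r = \binom{r}{m}t^r$, but $\theta_m = \partial_t^{[m]}t^m$ (operators compose right-to-left), so in fact $\theta_m\bullet t^r = \partial_t^{[m]}\bullet t^{m+r} = \binom{m+r}{m}t^r$; you have effectively computed with $t^m\partial_t^{[m]}$ instead. For vi) this is harmless (any diagonal operators commute), and in i) you happen to compute correctly, but in vii) and viii) the eigenvalue matters. In vii) your sketch tries to match $\binom{r}{m}$ against a digit-by-digit product, but with the correct eigenvalue $\binom{m+r}{m}$ one must track carries in $r\mapsto m+r$, and this you never confront; the paper sidesteps the whole issue by manipulating operators directly, writing $\partial_t^{[m]}t^m = \prod_i \partial_t^{[a_{e-i}p^{e-i}]}\cdot t^m$, commuting the $\partial_t^{[p^i]}$ past the $t^{p^j}$ for $i<j$, applying iv) digit by digit, and using iii) plus the Lucas-verified identity $\frac{(ap^e)!}{(p^e!)^a}\equiv a!\ (\mathrm{mod}\,p)$. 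Your route is not doomed but requires a Lucas-with-carries argument you have not supplied. In viii) the statement's $\theta_p^j$ is a typographical rendering of $\theta_{p^j}$ (the paper's own proof works with $\theta_{p^j}$, and one can check $[\partial_t,\theta_1]=\partial_t$ so the $i=j$ dichotomy is correct), not $(\theta_p)^j$ as you assume; your ensuing deduction that $i=1$ is the special case, and your expression for the bracket on $t^r$, are both artifacts of these two misreadings and do not constitute a proof. So parts i)–vi) are fine in outline, but vii) and viii) have genuine gaps: fix the eigenvalue of $\theta_m$, read viii) as $\theta_{p^j}$, and either carry out the carry analysis or switch to the paper's operator-level rewriting for vii).
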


\begin{proof}
In order to prove i), it is enough to show that both sides give the
same result when applied to a monomial $t^n$, for $n\geq 0$. Note
that $\theta_m\bullet t^r={{m+r}\choose r} t^r$. Therefore we have
$$[t,\theta_m]\bullet t^n=\left({{m+n}\choose {m}}-{{m+n+1}\choose
{m}}\right)t^{n+1}=-{{m+n}\choose
{m-1}}t^{n+1}=-\theta_{m-1}t\bullet t^n.$$

We similarly deduce ii):
$$[\partial_t^{[p^e]},t^{p^e}]\bullet t^n=\left({{n+p^e}\choose
{p^e}}-{{n}\choose {p^e}}\right)t^n=t^n,$$ where the second equality
follows from Lucas' Theorem. Moreover, ii) implies by induction on
$r$ that
$$[\partial_t^{[p^e]}, (t^{p^e})^r]=r(t^{p^e})^{r-1}.$$
As a consequence, we easily get iii), also by induction on $r$.

The formulas in iv) and v) follow, too, by evaluating both sides on
every $t^n$:
$$\frac{(sr)!}{(s!)^r}\partial_t^{[sr]}\bullet t^n=\frac{(sr)!}{(s!)^r}
{n\choose {sr}}t^{n-rs}={n\choose s}\cdot {{n-s}\choose
s}\cdots{{n-(r-1)s}\choose{s}}t^{n-rs}=\left(\partial_t^{[s]}\right)^r\bullet
t^n,$$
$${{i+j}\choose
{i}}\partial_t^{[i+j]}\bullet
t^n={{i+j}\choose{i}}{{n}\choose{i+j}}t^{n-(i+j)}={{n-j}\choose
{i}}{{n}\choose{j}}t^{n-(i+j)}=\partial_t^{[i]}\partial_t^{[j]}\bullet
t^n.$$ To get vi), note that
$$\theta_i\theta_j\bullet t^n={{n+i}\choose {i}} {{n+j}\choose
{j}}t^{n}=\theta_j\theta_i\bullet t^n.$$

We now show vii). Note that by Lucas' Theorem, for every $0\leq
i\leq e$ and every $a\in\{1,\ldots,p-1\}$ we have in $\FF_p$
$${{a_ip^i+\cdots+a_ep^e}\choose {a_ip^i}}=1\,\,\,\,\,{\rm and}\,\,\,\,\frac{(ap^e)!}{(p^e!)^a}=\prod_{j=1}^a
{{jp^e}\choose {p^e}}=\prod_{j=1}^aj=a!.$$ Using this and v), iv),
iii), plus the fact that $[\partial_t^{[p^i]},t^{p^j}]=0$ whenever
$i<j$, we get
$$\partial_t^{[m]}t^m=\left(\prod_{i=0}^e\partial_t^{[a_{e-i}p^{e-i}]}\right)\cdot
t^m=\prod_{i=0}^e\frac{\left(\partial_t^{[p^{e-i}]}\right)^{a_{e-i}}}{a_{e-i}!}\cdot
\prod_{i=0}^et^{a_{e-i}p^{e-i}}$$
$$=\prod_{i=0}^e\frac{1}{a_{e-i}!}\left(\partial_t^{[p^{e-i}]}\right)^{a_{e-i}}(t^{p^{e-i}})^{a_{e-i}}
=\prod_{i=0}^e\frac{1}{a_i!}\cdot
\prod_{j=0}^{a_i-1}(\theta_{p^i}+j).$$
To avoid trivial special cases, the above products can be taken to run over those $i$ such that $a_{e-i}\neq 0$.

In order to prove viii), we evaluate both sides on $t^m$. 
Since $\partial_t^{[q]}\bullet t^m={{m}\choose
{q}}t^{m-q}$ and $\theta_q\bullet t^m={{q+m}\choose{q}}t^m$ for
every $m$ and $q$, we deduce that
$$[\partial_t^{[p^i]},\theta_{p^j}]\bullet t^m=
{{m}\choose{p^i}}\left({{m+p^j}\choose{p^j}}-{{m+p^j-p^i}\choose{p^j}}\right)t^{m-p^i}.$$

If we write $m+p^j=b_0+b_1p+\cdots$, with all
$b_i\in\{0,\ldots,p-1\}$, then ${{m+p^j}\choose {p^j}}=b_j$ in
$\FF_p$ (this is a consequence of Lucas' Theorem).
We deduce that
$${{m+p^j}\choose {p^j}}-{{m}\choose{p^j}}\equiv 1\,({\rm mod}\,p),$$
which gives our assertion when $i=j$.

Suppose now that $i\neq j$. We may assume that $p^i\leq m$, since otherwise 
${{m}\choose {p^i}}=0$. 
 If $i>j$, then the coefficients of $p^j$
in the $p$-adic expansions of $m+p^j$ and $m+p^j-p^i$ are the same,
hence ${{m+p^j}\choose {p_j}}={{m+p^j-p^i}\choose {p^j}}$ in
$\FF_p$. On the other hand, if $i<j$, then the coefficients of $p^i$
in the $p$-adic expansions of $m$ and $m+p^j$ are the same. Then
either they are equal to zero, in which case ${{m}\choose{p^i}}=0$,
or they are positive, and then $m+p^j$ and $m+p^j-p^i$ have the same
coefficient of $p^j$ in their $p$-adic expansion. In either case, we
get
$${{m}\choose{p^i}}\left({{m+p^j}\choose{p^j}}-{{m+p^j-p^i}\choose{p^j}}\right)\equiv 0
\,({\rm mod}\,p).$$
\end{proof}

It is easy to deduce from Lemma~\ref{lem5_2} the fact that the operators
$\theta_1,\theta_p,\ldots,\theta_{p^{e-1}}$ admit a common basis of
eigenvectors on every $D_{R[t]}^e$-module.

\begin{proposition}\label{decomp1}
If $M$ is a $D_{R[t]}^e$-module, then there is a unique
decomposition
$$M=\bigoplus_{i_1,\ldots,i_e\in\FF_p}M_{i_1,\ldots,i_e},$$
where for $1\leq \ell\leq e$, the operator $\theta_{p^{\ell-1}}$
acts on $M_{i_1,\ldots,i_e}$ by $-i_{\ell}$. Moreover, each
$M_{i_1,\ldots,i_e}$ is a $D_R^e$-module, and every morphism of
$D_{R[t]}^e$-modules preserves this decomposition.
\end{proposition}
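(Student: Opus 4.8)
The plan is to show that for each $j$ with $0\le j\le e-1$ the operator $\theta_{p^j}$ acts on $M$ as a root of the separable polynomial $X^p-X=\prod_{i\in\FF_p}(X+i)$ over $\FF_p$, and then to assemble the resulting eigenspace decompositions using the fact (Lemma~\ref{lem5_2}(vi)) that the $\theta_{p^j}$ commute pairwise. Note first that $\theta_{p^j}=\partial_t^{[p^j]}t^{p^j}\in D_{R[t]}^e$ since $p^j<p^e$. The crux is the identity
$$\prod_{i=0}^{p-1}\bigl(\theta_{p^j}+i\bigr)=0\qquad\text{in }D_{R[t]}^e\quad(0\le j\le e-1).$$
By Lemma~\ref{lem5_2}(iii) (applied with $e$ replaced by $j$ and $r=p$) the left side equals $(\partial_t^{[p^j]})^p(t^{p^j})^p$, and by Lemma~\ref{lem5_2}(iv) (with $s=p^j$, $r=p$) we have $(\partial_t^{[p^j]})^p=\frac{(p^{j+1})!}{(p^j!)^p}\,\partial_t^{[p^{j+1}]}$. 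The integer $\frac{(p^{j+1})!}{(p^j!)^p}$ has $\binom{p^{j+1}}{p^j}$ as a factor, and $\binom{p^{j+1}}{p^j}\equiv 0\pmod p$ by Lucas' theorem (the coefficient of $p^j$ in the base-$p$ expansion of $p^{j+1}$ is $0$); hence $(\partial_t^{[p^j]})^p=0$ in the characteristic-$p$ ring $D_{R[t]}$, and the displayed identity follows.

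Granting this, the decomposition is produced by a standard semisimplicity argument. For fixed $\ell\in\{1,\dots,e\}$, writing $\theta=\theta_{p^{\ell-1}}$, the linear factors of $\prod_{i\in\FF_p}(X+i)$ are pairwise coprime in $\FF_p[X]$, so a partial-fractions (Chinese Remainder) computation produces commuting idempotents in $\FF_p[\theta]$ projecting $M$ onto the eigenspaces, whence $M=\bigoplus_{i\in\FF_p}\Ker(\theta+i)$. Since the operators $\theta_1,\theta_p,\dots,\theta_{p^{e-1}}$ commute, the corresponding families of projectors commute as well, and intersecting the $e$ decompositions yields
$$M=\bigoplus_{i_1,\dots,i_e\in\FF_p}M_{i_1,\dots,i_e},\qquad M_{i_1,\dots,i_e}:=\bigcap_{\ell=1}^{e}\Ker\bigl(\theta_{p^{\ell-1}}+i_\ell\bigr),$$
which is the asserted decomposition. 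Uniqueness is automatic: in any decomposition with the stated eigenvalue behaviour the $(i_1,\dots,i_e)$-summand is contained in $M_{i_1,\dots,i_e}$, and two direct-sum decompositions of $M$ related by such inclusions must coincide term by term.

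Finally, the functoriality assertions are formal. An operator $P\in D_R^e$ acts on $R[t]=\bigoplus_{r\ge 0}Rt^r$ coefficientwise, while $\theta_{p^j}$ acts on $Rt^r$ through the scalar $\binom{r+p^j}{p^j}\in\FF_p$; as $P$ is $R^{p^e}$-linear and $\FF_p\subseteq R^{p^e}$, this gives $P\theta_{p^j}=\theta_{p^j}P$ in $D_{R[t]}^e$. Hence $D_R^e$ preserves each $\Ker(\theta_{p^j}+i)$, so each $M_{i_1,\dots,i_e}$ is a $D_R^e$-submodule of $M$. Likewise, a morphism $\phi\colon M\to N$ of $D_{R[t]}^e$-modules commutes with every $\theta_{p^j}$, hence $\phi\bigl(\Ker_M(\theta_{p^j}+i)\bigr)\subseteq\Ker_N(\theta_{p^j}+i)$ and therefore $\phi(M_{i_1,\dots,i_e})\subseteq N_{i_1,\dots,i_e}$. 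The only step requiring a moment's care is the divisibility $p\mid\binom{p^{j+1}}{p^j}$ underlying the key identity; everything else is routine bookkeeping with commuting semisimple operators, and the real substance is contained in Lemma~\ref{lem5_2}.
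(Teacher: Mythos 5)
Your proof is correct and follows the same route as the paper's: establish the identity $\prod_{i=0}^{p-1}(\theta_{p^j}+i)=0$ in $D_{R[t]}^e$ by reducing to $(\partial_t^{[p^j]})^p=0$ via Lemma~\ref{lem5_2}(iii)--(iv) and the divisibility $p\mid\frac{(p^{j+1})!}{(p^j!)^p}$, then invoke pairwise commutativity (vi) and the standard semisimplicity/idempotent argument. The paper dispatches the remaining assertions (uniqueness, $D_R^e$-stability, functoriality) as ``immediate''; you have supplied the routine details, including the useful observation that $[P,\theta_{p^j}]=0$ for $P\in D_R^e$ because $\theta_{p^j}$ acts on each $Rt^r$ by an $\FF_p$-scalar.
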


\begin{proof}
Assertion iii) in the lemma implies that
$$\prod_{j=0}^{p-1}(\theta_{p^e}+j)=0$$
for every $e\geq 0$. Indeed, it is enough to show that
$(\partial_t^{[p^e]})^p=0$, and this follows from iv), since
$\frac{p^{e+1}!}{(p^e!)^p}$ is divisible by $p$.

Moreover, it follows from vi) that the $\theta_{p^e}$ are pairwise
commuting operators. This gives the existence of the decomposition
in the proposition, and the other assertions are immediate.
\end{proof}

\begin{remark}\label{rem_lower_e}
If $M$ is a $D_{R[t]}^e$-module, then $M$ is in
particular a $D_{R[t]}^{e-1}$-module, hence we get a
corresponding decomposition as such. It is clear that these decompositions
are compatible, that is
$$M_{i_1,\ldots,i_{e-1}}=\bigoplus_{j\in\FF_p}M_{i_1,\ldots,i_{e-1},j}.$$
\end{remark}

\begin{remark}\label{rem_other_theta}
If $M$ is as above, and if $m=b_1+b_2p+\cdots+b_ep^{e-1}$, where
all $b_i\in\{0,\ldots,p-1\}$, then $\theta_m$ acts on
$M_{i_1,\ldots,i_e}$ by
$$\prod_{\ell=1}^e(-1)^{b_{\ell}}{{i_{\ell}}\choose{b_{\ell}}}.$$
This is a consequence of the formula in Lemma~\ref{lem5_2} vii).
\end{remark}

\begin{proposition}\label{prop5_3}
If $M$ is a $D_{R[t]}^e$-module, then for every
$1\leq\ell\leq e$ we have
\begin{enumerate}
\item[i)] $\partial_t^{[p^{\ell-1}]}\cdot M_{i_1,\ldots,i_e}
\subseteq M_{i_1,\ldots,i_{\ell}+1,\ldots,i_e}$.
\item[ii)] \[
t^{p^{\ell-1}}\cdot M_{i_1,\ldots,i_e} \subseteq \left\{
\begin{array}{cl}
M_{i_1,\ldots,i_{\ell}-1\ldots,i_e}, & \text{if}\,\,i_{\ell}\neq 0; \\[2mm]
M_{i_1,\ldots,p-1,i_{\ell+1}-1,\ldots, i_e}, & \text{if}\,\,i_{\ell}=0,\,i_{\ell+1}\neq 0; \\[2mm]
\vdots & \vdots \\[2mm]
M_{i_1,\ldots,p-1,\ldots,p-1,i_e-1}, & \text{if}\,\,i_{\ell}=\ldots=i_{e-1}=0,\,i_e\neq 0; \\[2mm]
M_{i_1,\ldots,i_{\ell-1},p-1,\ldots,p-1}, &
\text{if}\,\,i_{\ell}=\ldots=i_e=0.
\end{array}\right.
\]
\end{enumerate}
\end{proposition}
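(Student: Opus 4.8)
First I would dispose of part~(i). For $w\in M_{i_1,\dots,i_e}$ and $1\le m\le e$, expand $\theta_{p^{m-1}}\bigl(\partial_t^{[p^{\ell-1}]}w\bigr)=\partial_t^{[p^{\ell-1}]}\theta_{p^{m-1}}w-[\partial_t^{[p^{\ell-1}]},\theta_{p^{m-1}}]w$ and plug in Lemma~\ref{lem5_2}(viii): the bracket equals $\partial_t^{[p^{\ell-1}]}$ if $m=\ell$ and $0$ otherwise. So $\theta_{p^{m-1}}$ acts on $\partial_t^{[p^{\ell-1}]}w$ by $-i_m$ for $m\ne\ell$ and by $-i_\ell-1$ for $m=\ell$, which is exactly the assertion.

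For part~(ii) the plan is to compute the common eigenvalues of $t^{p^{\ell-1}}w$ from $\theta_{p^{m-1}}(t^{p^{\ell-1}}w)=t^{p^{\ell-1}}\theta_{p^{m-1}}w+[\theta_{p^{m-1}},t^{p^{\ell-1}}]w$. For $m<\ell$ the commutator vanishes, since $\partial_t^{[p^{m-1}]}$ and $t^{p^{m-1}}$ both commute with $t^{p^{\ell-1}}$ (the former by the vanishing recorded in the proof of Lemma~\ref{lem5_2}(vii)); hence these indices are unchanged. For $m\ge\ell$ I would establish the operator identity
\[
[\theta_{p^{m-1}},t^{p^{\ell-1}}]=t^{p^{m-1}}\,\partial_t^{[p^{m-1}-p^{\ell-1}]},
\]
by writing $\theta_{p^{m-1}}=t^{p^{m-1}}\partial_t^{[p^{m-1}]}+1$ (Lemma~\ref{lem5_2}(ii)) and reducing to $[\partial_t^{[p^{m-1}]},t^{p^{\ell-1}}]=\partial_t^{[p^{m-1}-p^{\ell-1}]}$; the latter I would check, as with every computation in Lemma~\ref{lem5_2}, by evaluating both sides on a monomial $t^n$ — by Lucas' theorem the resulting difference of binomial coefficients is $1$ precisely when the $p$-adic digits of $n$ in positions $\ell-1,\dots,m-2$ are all equal to $p-1$, which matches $\partial_t^{[p^{m-1}-p^{\ell-1}]}\bullet t^n$. (For $m=\ell$ this degenerates via $\partial_t^{[0]}=1$ to $[\theta_{p^{\ell-1}},t^{p^{\ell-1}}]=t^{p^{\ell-1}}$.)

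The core step is then to evaluate $t^{p^{m-1}}\partial_t^{[p^{m-1}-p^{\ell-1}]}w$ for $m\ge\ell$. Using Lemma~\ref{lem5_2}(iv),(v) to write $\partial_t^{[p^{m-1}-p^{\ell-1}]}$ as a nonzero scalar times $\prod_{c=\ell-1}^{m-2}(\partial_t^{[p^c]})^{p-1}$, part~(i) shows $\partial_t^{[p^{m-1}-p^{\ell-1}]}w$ lies in the eigenspace with indices $\ell,\dots,m-1$ each lowered by $1$. I would then split into two cases. \emph{(a)} If some $i_r\ne0$ with $\ell\le r\le m-1$, then $\partial_t^{[p^{m-1}-p^{\ell-1}]}w=0$: factoring out $(\partial_t^{[p^{r-1}]})^{p-1}$ and setting $v:=(\partial_t^{[p^{r-1}]})^{p-1}x$, where $x=\prod_{c\ne r-1}(\partial_t^{[p^c]})^{p-1}w$ has $r$-th index $i_r$ by part~(i), one has $\partial_t^{[p^{r-1}]}v=(\partial_t^{[p^{r-1}]})^px=0$ by the vanishing established in the proof of Proposition~\ref{decomp1}; hence $\theta_{p^{r-1}}v=(t^{p^{r-1}}\partial_t^{[p^{r-1}]}+1)v=v$ (Lemma~\ref{lem5_2}(ii)), while part~(i) forces $\theta_{p^{r-1}}v=(1-i_r)v$, so $i_rv=0$ and $v=0$; thus $\theta_{p^{m-1}}(t^{p^{\ell-1}}w)=-i_m\,t^{p^{\ell-1}}w$. \emph{(b)} If $i_\ell=\dots=i_{m-1}=0$, write $t^{p^{m-1}}\partial_t^{[a]}=t^{p^{\ell-1}}\bigl(t^{a}\partial_t^{[a]}\bigr)$ with $a=p^{m-1}-p^{\ell-1}$, check on monomials (Lucas together with Fermat's little theorem) that $t^{a}\partial_t^{[a]}=\prod_{c=\ell-1}^{m-2}\bigl(1-\theta_{p^c}^{p-1}\bigr)$, and note that this operator acts on $M_{i_1,\dots,i_e}$ by $\prod_c\bigl(1-(-i_{c+1})^{p-1}\bigr)=1$; hence $t^{p^{m-1}}\partial_t^{[a]}w=t^{p^{\ell-1}}w$ and $\theta_{p^{m-1}}(t^{p^{\ell-1}}w)=(1-i_m)\,t^{p^{\ell-1}}w$.

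Finally I would assemble the pieces: if $q$ denotes the least index $\ge\ell$ with $i_q\ne0$ (and $q=e+1$ when $i_\ell=\dots=i_e=0$), the computations above show that $t^{p^{\ell-1}}w$ is a common eigenvector whose indices coincide with those of $w$ except that slots $\ell,\dots,q-1$ become $p-1$ and, if $q\le e$, slot $q$ becomes $i_q-1$. Running over $q=\ell,\ell+1,\dots,e,e+1$ reproduces exactly the case list in the statement. I expect the genuine difficulty to lie in the $m>\ell$ analysis — obtaining the commutator $[\theta_{p^{m-1}},t^{p^{\ell-1}}]$ in the clean form above, and then separating the vanishing case $\partial_t^{[p^{m-1}-p^{\ell-1}]}w=0$ from the ``carrying'' contribution $t^{p^{m-1}}\partial_t^{[p^{m-1}-p^{\ell-1}]}w=t^{p^{\ell-1}}w$; everything else reduces to tracking $p$-adic digits.
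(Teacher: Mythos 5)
Your proof is correct, but it takes a genuinely different route from the paper's. The paper reduces part~(ii) to the case $\ell=1$ by observing that $t^{p^{\ell-1}}$ is $t$ applied $p^{\ell-1}$ times, and then handles that single case with a short induction: it uses the commutator $[t,\theta_m]=-\theta_{m-1}t$ from Lemma~\ref{lem5_2}(i) together with Remark~\ref{rem_other_theta} (the explicit action of $\theta_{p^{e'}-1}$ on each component) to get the recursion $\theta_{p^{\ell-1}}(tw)=t\theta_{p^{\ell-1}}(w)+\theta_{p^{\ell-1}-1}(tw)$ and then reads off the new eigenvalues one level at a time. You instead work with $t^{p^{\ell-1}}$ directly: you establish the general operator identity $[\theta_{p^{m-1}},t^{p^{\ell-1}}]=t^{p^{m-1}}\partial_t^{[p^{m-1}-p^{\ell-1}]}$ (which is not in Lemma~\ref{lem5_2}, so you supply the Lucas-theorem verification), and then split the analysis according to whether $\partial_t^{[p^{m-1}-p^{\ell-1}]}w$ vanishes (governed by whether some $i_r$ with $\ell\le r\le m-1$ is nonzero) or reproduces $t^{p^{\ell-1}}w$ exactly (the ``carry'' case, via the projector $\prod_c(1-\theta_{p^c}^{p-1})$). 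Both arguments are complete and ultimately rest on the same $p$-adic digit bookkeeping via Lucas. The paper's reduction to $\ell=1$ gives a shorter proof with fewer new identities; your direct approach is longer and more computational but makes the full commutator structure between the $\theta_{p^{m-1}}$ and the $t^{p^{\ell-1}}$ visible, and in particular produces the clean identity $[\theta_{p^{m-1}},t^{p^{\ell-1}}]=t^{p^{m-1}}\partial_t^{[p^{m-1}-p^{\ell-1}]}$ that is of independent interest.
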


\begin{proof}
The first formula follows from Lemma~\ref{lem5_2} viii). For the second
assertion, it is enough to consider the case $\ell=1$, since the
general case follows applying this one $p^{\ell-1}$ times. Note
first that by Remark~\ref{rem_other_theta}, for every $1\leq e'\leq e$ the
operator $\theta_{p^{e'}-1}$ is described on each component by
\[
\theta_{p^{e'}-1}\vert_{M_{j_1,\ldots,j_e}}=\left\{
\begin{array}{cl}
{\rm Id}\vert_{M_{j_1,\ldots,j_e}}, & \text{if}\,\,j_1=\ldots=j_{e'}=p-1; \\[2mm]
0, & \text{otherwise}.
\end{array}\right.
\]
Let $w\in M_{i_1,\ldots,i_e}$. We show by induction on
$\ell\in\{1,\ldots,e\}$ that
\[
\theta_{p^{\ell-1}}(tw)=\left\{
\begin{array}{cl}
-(i_{\ell}-1)tw, & \text{if}\,\,i_1=\ldots=i_{\ell-1}=0; \\[2mm]
-i_{\ell}tw, & \text{otherwise}
\end{array}\right.
\]
(with the convention that when $\ell=1$ we are always in the first
case). This implies the assertion in ii) for multiplication by $t$.

By Lemma~\ref{lem5_2} i), we have
\begin{equation}\label{eq1}
\theta_{p^{\ell-1}}(tw)=t\theta_{p^{\ell-1}}(w)+\theta_{p^{\ell-1}-1}(tw)
\end{equation}
(with the convention that $\theta_0=1$). This gives
$\theta_1(tw)=-(i_1-1)tw$. Suppose now that we know the formula for
$\theta_{p^{\ell'-1}}(tw)$ for all $\ell'\leq\ell-1$. In particular,
this implies that $tw\in M_{j_1,\ldots,j_{\ell-1}}$ for some $j_1,\ldots,j_{\ell-1}$, 
and $j_1=\ldots=j_{\ell-1}=p-1$ if and only if $i_1=\ldots=i_{\ell-1}=0$.
Our description of $\theta_{p^{\ell-1}-1}$ gives
\[ \theta_{p^{\ell-1}-1}(tw)=\left\{
\begin{array}{cl}
tw, & \text{if}\,\,i_1=\ldots=i_{\ell-1}=0; \\[2mm]
0, & \text{otherwise}.
\end{array}\right.
\]
The formula for $\theta_{p^{\ell-1}}(tw)$ now follows from this and
(\ref{eq1}). The proof of ii) is now complete.
\end{proof}

\begin{remark}\label{rem5_11}
It follows from Proposition~\ref{prop5_3} ii) that for every
$D_{R[t]}^e$-module $M$ and every
$i_1,\ldots,i_e\in\FF_p$ the component $M_{i_1,\ldots,i_e}$ is a
$D_R^e[t^{p^e}]$-submodule.
\end{remark}

\begin{example}\label{structure_sheaf}
If we write $m=\sum_{i\geq 1}a_ip^{i-1}$, with $0\leq a_i\leq p-1$,
then we have seen that $\theta_{p^e}\bullet t^m={{m+p^e}\choose
p^e}t^m=(a_e+1)t^m$. It follows that if $M=R[t]$, then
for every $a_1,\ldots,a_e\in\{0,\ldots,p-1\}$, the component
$R[t]_{a_1,\ldots,a_e}$ of $R[t]$ is
free over $R[t^{p^e}]$ with basis $t^m$, where
$m=\sum_{i=1}^e(p-1-a_i)p^{i-1}$.
\end{example}

\section{The $D$-module $B_f$ in positive characteristic}

We now specialize the discussion in the previous section to the
case of the module $B_f$. 
Suppose that $f\in R$ is nonzero. By analogy with the situation
in \S 2, we put
$$B_f:=R[t]_{f-t}/R[t].$$
Since $R[t]$ is naturally a $D_{R[t]}$-module, and since every
localization of a $D_{R[t]}$-module is again a $D_{R[t]}$-module, we
see that $B_f$ has a natural structure of $D_{R[t]}$-module. We want to study
the decomposition of $B_f$ under the action of Euler operators.

In order to describe this decomposition we will make use of the fact that $B_f$ is a unit
$F$-module. We start with a lemma that applies to arbitrary unit $F$-modules.  
 For the theory of unit $F$-modules
we refer to \cite{Lyu} or \cite{Bli}.
Let $R[t]^{(e)}$ denote the $R[t]$-bimodule $R[t]$, with the left
module structure being the usual one, and the right one being induced by the
$e^{\rm th}$ iterated Frobenius. 
A \emph{unit $F$-module} over $R[t]$ is an $R[t]$-module $M$, together with
a map $F\colon M\to M$ that
is semilinear with respect to the Frobenius morphism on $R[t]$, and
such that the induced $R[t]$-linear map $\nu_1\colon
R[t]^{(1)}\otimes_{R[t]}M\to M$ given by $\nu_1(h\otimes
w)=hF(w)$ is an isomorphism. Iterating, we get isomorphisms
$$\nu_e\colon R[t]^{(e)}\otimes_{R[t]} M\to M$$
for every $e\geq 1$. Note that $R[t]^{(e)}\otimes_{R[t]}M$ has a
natural $D_{R[t]}^e$-module structure such that $P\cdot (h\otimes
w)=(P\bullet h)\otimes w$. 
It follows that a unit $F$-module $M$ over $R[t]$ has a canonical
$D_{R[t]}^e$-module structure
 such that $\nu_e$ is an isomorphism of
$D_{R[t]}^e$-modules. In fact, letting $e$ vary one gets a
$D_{R[t]}$-module structure on $M$.

\begin{lemma}\label{lem_F_module1}
For every unit $F$-module $M$ over $R[t]$, and every
$i_1,\ldots,i_e\in\{0,\ldots,p-1\}$, the component
$M_{i_1,\ldots,i_e}$ is generated as an $R$-module by $t^m
F^e(M)$, where $m=\sum_{\ell=1}^e(p-i_{\ell}-1)p^{\ell-1}$.
\end{lemma}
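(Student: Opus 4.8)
The plan is to compute the component $M_{i_1,\ldots,i_e}$ directly using the unit $F$-module isomorphism $\nu_e\colon R[t]^{(e)}\otimes_{R[t]}M\to M$, which is an isomorphism of $D_{R[t]}^e$-modules and hence respects the eigenspace decomposition of Proposition~\ref{decomp1}. Therefore it suffices to identify the $(i_1,\ldots,i_e)$-component of $R[t]^{(e)}\otimes_{R[t]}M$, where the $D_{R[t]}^e$-action is $P\cdot(h\otimes w)=(P\bullet h)\otimes w$. Since the Euler operators $\theta_1,\theta_p,\ldots,\theta_{p^{e-1}}$ act only through the left factor $h\in R[t]^{(e)}$, the decomposition of the tensor product into common eigenspaces is induced by the decomposition of $R[t]$ as computed in Example~\ref{structure_sheaf}.

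The key steps, in order, are as follows. First, recall from Example~\ref{structure_sheaf} that $R[t]_{a_1,\ldots,a_e}$ is free over $R[t^{p^e}]$ with basis $t^m$ where $m=\sum_{i=1}^e(p-1-a_i)p^{i-1}$; so as an $R^{p^e}$-module (note $R[t^{p^e}] = R \cdot R[t]^{p^e}$ is what we get after tensoring, and the right action of $R[t]$ on $R[t]^{(e)}$ is through $p^e$-th powers), the component $R[t]^{(e)}_{a_1,\ldots,a_e}$ is spanned by $R\cdot t^m$ with $m$ as above. Second, applying $\bullet\otimes_{R[t]}M$: an element of $R[t]^{(e)}\otimes_{R[t]}M$ lying in the $(i_1,\ldots,i_e)$-component is an $R$-linear combination of elements $t^m\otimes w$ with $m=\sum_{\ell=1}^e(p-1-i_\ell)p^{\ell-1}$ and $w\in M$ — here one uses that every element of $R[t]^{(e)}$ is, after moving $R$-coefficients to the left, an $R$-combination of the monomials $1,t,\ldots,t^{p^e-1}$, and these monomials lie in distinct components. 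Third, push through $\nu_e$: since $\nu_e(t^m\otimes w)=t^m F^e(w)$, we conclude that $M_{i_1,\ldots,i_e}$ is generated as an $R$-module by $t^m F^e(M)$ with $m=\sum_{\ell=1}^e(p-1-i_\ell)p^{\ell-1}$, which is exactly the claim. One small bookkeeping point to check is that $R[t]^{(e)}$ is finitely generated (free of rank $p^e$) over $R[t^{p^e}]\otimes_{R^{p^e}}R = R[t]^{p^e}\cdot R$ — equivalently, over the image of the right $R[t]$-action tensored with the left $R$ — so that these monomials really do generate everything after tensoring.

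The main obstacle I anticipate is purely notational: keeping straight the two module structures on $R[t]^{(e)}$ (left vs. Frobenius-twisted right) and verifying carefully that the $D_{R[t]}^e$-action on the tensor product factors through the left factor, so that the eigenspace decomposition of the tensor product is literally $\bigoplus_{a_1,\ldots,a_e}\big(R[t]^{(e)}_{a_1,\ldots,a_e}\otimes_{R[t]}M\big)$. Once that is set up correctly, the computation is a direct combination of Example~\ref{structure_sheaf} with the fact that $\nu_e$ is an isomorphism of $D_{R[t]}^e$-modules; there is no hard estimate or delicate limiting argument involved.
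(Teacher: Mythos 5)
Your proof is correct and follows essentially the same route as the paper's: use that $\nu_e$ is a $D_{R[t]}^e$-module isomorphism to identify $M_{i_1,\ldots,i_e}$ with the $(i_1,\ldots,i_e)$-component of $R[t]^{(e)}\otimes_{R[t]}M$, observe that the Euler operators act through the left factor so that component is $R[t]^{(e)}_{i_1,\ldots,i_e}\otimes_{R[t]}M$, and then invoke Example~\ref{structure_sheaf} together with the Frobenius-twisted right module structure to reduce every generator to $rt^m\otimes w$ with $m=\sum_\ell(p-1-i_\ell)p^{\ell-1}$. The paper states the same argument more tersely; your version makes explicit the bookkeeping (the eigenspace components of $R[t]^{(e)}$ being right $R[t]$-submodules, cf. Remark~\ref{rem5_11}) that the paper leaves implicit.
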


\begin{proof}
Since $\nu_e$ is an isomorphism of $D_{R[t]}^e$-modules, it induces an
isomorphism between the corresponding components of the two
$D_{R[t]}^e$-modules. Therefore every element in $M_{i_1,\ldots,i_e}$ 
can be written as $\nu_e(h\otimes w)=hF^e(w)$, for some 
$h\in R[t]_{i_1,\ldots,i_e}$. We now deduce our
assertion from Example~\ref{structure_sheaf}.
\end{proof}

\begin{corollary}\label{cor_F_module2}
If $M$ is a unit $F$-module over $R[t]$, then $F(M_{i_1,\ldots,i_e})
\subseteq M_{p-1,i_1,\ldots,i_e}$.
\end{corollary}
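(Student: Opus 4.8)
The plan is to deduce Corollary~\ref{cor_F_module2} directly from Lemma~\ref{lem_F_module1}, since the lemma already pins down each component of a unit $F$-module in terms of the Frobenius image. First I would recall from the lemma that $M_{i_1,\ldots,i_e}$ is generated as an $R$-module by $t^m F^e(M)$, where $m=\sum_{\ell=1}^e(p-i_\ell-1)p^{\ell-1}$. I want to apply the same description one step further: $M_{p-1,i_1,\ldots,i_e}$ is a component of the decomposition of $M$ regarded as a $D_{R[t]}^{e+1}$-module (here I use that a unit $F$-module carries a canonical $D_{R[t]}$-module structure, so in particular a $D_{R[t]}^{e+1}$-module structure, and that the decompositions at different levels are compatible by Remark~\ref{rem_lower_e}). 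So Lemma~\ref{lem_F_module1} applied with $e+1$ in place of $e$ and with the index string $(p-1,i_1,\ldots,i_e)$ tells me that $M_{p-1,i_1,\ldots,i_e}$ is generated over $R$ by $t^{m'}F^{e+1}(M)$, where $m'=\sum_{\ell=1}^{e+1}(p-j_\ell-1)p^{\ell-1}$ with $j_1=p-1$ and $j_{\ell+1}=i_\ell$ for $1\le \ell\le e$; the $\ell=1$ term vanishes and reindexing gives $m'=\sum_{\ell=1}^{e}(p-i_\ell-1)p^{\ell}=pm$.

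Now I would take an arbitrary $w\in M_{i_1,\ldots,i_e}$ and show $F(w)\in M_{p-1,i_1,\ldots,i_e}$. By Lemma~\ref{lem_F_module1}, $w=\sum_k a_k t^m F^e(w_k)$ for some $a_k\in R$ and $w_k\in M$. Applying the Frobenius-semilinear map $F$, and using that $F(ht^m F^e(x))=h^p t^{mp}F^{e+1}(x)$ (semilinearity over $R[t]$, together with $F\circ F^e=F^{e+1}$), I get $F(w)=\sum_k a_k^p t^{pm}F^{e+1}(w_k)$, which lies in $R\cdot t^{pm}F^{e+1}(M)=R\cdot t^{m'}F^{e+1}(M)\subseteq M_{p-1,i_1,\ldots,i_e}$ by the computation of the previous paragraph. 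This proves $F(M_{i_1,\ldots,i_e})\subseteq M_{p-1,i_1,\ldots,i_e}$.

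The only genuinely delicate point is the bookkeeping: one must check that prepending $p-1$ to the index string, which shifts the remaining exponents by a factor of $p$, produces precisely $m'=pm$, and that the new leading term $(p-(p-1)-1)p^0=0$ contributes nothing. This is a short and transparent calculation rather than a real obstacle. A secondary thing to verify, but one that is already established in the text, is that $F$ really sends $M$ into $M$ and interacts with the $D_{R[t]}$-structure as stated — this is built into the definition of unit $F$-module and the discussion of $\nu_e$ preceding Lemma~\ref{lem_F_module1}, so no extra work is needed. Thus the corollary follows formally from Lemma~\ref{lem_F_module1} applied twice, at levels $e$ and $e+1$.
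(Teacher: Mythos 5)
Your proof is correct and is exactly the intended deduction: the paper states the corollary without a written proof, immediately after Lemma~\ref{lem_F_module1}, and the natural argument is precisely to apply that lemma at levels $e$ and $e+1$, note that prepending $p-1$ to the index string multiplies the exponent $m$ by $p$ (the new $\ell=1$ term being $(p-(p-1)-1)p^0=0$), and use Frobenius-semilinearity of $F$ to land in $R\cdot t^{pm}F^{e+1}(M)=M_{p-1,i_1,\ldots,i_e}$. No gaps.
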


Let $M$ be a unit $F$-module over $R[t]$. Given $w\in M$,  for every $e\geq 1$ and every
$i_1,\ldots,i_e\in\{0,\ldots,p-1\}$, we put
$$w_{i_1,\ldots,i_e}:=\nu_e(t^{\sum_{\ell=1}^e(p-i_{\ell}-1)p^{\ell-1}}\otimes w)=t^{\sum_{\ell=1}^e(p-i_{\ell}-1)p^{\ell-1}}F^e(w)\in M.$$
It follows from Lemma~\ref{lem_F_module1} that
$w_{i_1,\ldots,i_e}\in M_{i_1,\ldots,i_e}$. Note that the
induced map $M\to M_{i_1,\ldots,i_e}$ that takes each $w$ to
$w_{i_1,\ldots,i_e}$ is semilinear with respect to the $e^{\rm th}$
iterate of the Frobenius morphism on $R[t]$.

\bigskip

We now turn to the case of the module $B_f$. 
The $D_{R[t]}$-module structure on $B_f$ is induced by a unit $F$-module structure,
such that the $R[t]$-linear isomorphism $\nu_1\colon
R[t]^{(1)}\otimes_{R[t]}B_f\to B_f$ is given by $\nu_1(a\otimes
u)=au^{p}$. Therefore the induced isomorphism $\nu_e$ satisfies $\nu_e(a\otimes u)=
au^{p^e}$.

Note that $B_f$ is a free $R$-module with basis $\{\delta_m\}_{m\geq
0}$, where $\delta_m$ is the class of $\frac{1}{(f-t)^{m+1}}$ in
$B_f$. A special role is played by $\delta:=\delta_0$.
It follows by direct computation that for every $e\geq 0$ we
have
\begin{equation}\label{eq4_1}
t^{p^e}\cdot
\delta_m=f^{p^e}\delta_m-\delta_{m-p^e}\,\,\,(\delta_i=0\,\,\text{for}\,\,i<0)
\end{equation}
\begin{equation}\label{eq4_2}
\partial_t^{[p^e]}\cdot \delta_m={{m+p^e}\choose{p^e}}\delta_{m+p^e}.
\end{equation}

Suppose now that $e\geq 1$ is fixed, and consider $i_1,\ldots,i_e\in\{0,\ldots,p-1\}$,
and a nonnegative integer $m$. We put
$$Q_{i_1,\ldots,i_e}^m:=(\delta_m)_{i_1,\ldots,i_e}=
\nu_e\left(t^{\sum_{\ell=1}^e(p-i_{\ell}-1)p^{\ell-1}}\otimes\delta_{m}\right).$$
We will see that when $m$ varies, these elements give an $R$-basis of $(B_f)_{i_1,\ldots,i_e}$.
We start by writing these elements in the basis given by the $\delta_i$.

\begin{lemma}\label{lem4_2}
With the above notation, for every
$i_1,\ldots,i_e\in\{0,1,\ldots,p-1\}$ and every nonnegative integer
$m$ we have
\begin{equation}\label{eq4_4}
Q_{i_1,\ldots,i_e}^m=(-1)^{i_1+\cdots+i_e}\sum_{j_1,\ldots,j_e}{{i_1+j_1}\choose
{i_1}}\cdots{{i_e+j_e}\choose{i_e}}f^{\sum_{\ell=1}^ej_{\ell}p^{\ell-1}}\delta_{mp^e+(i_1+j_1)+\cdots+(i_e+j_e)p^{e-1}},
\end{equation}
where the sum is over the integers $j_1,\ldots,j_e$ such that $0\leq
j_{\ell}\leq p-i_{\ell}-1$ for all $\ell$.
\end{lemma}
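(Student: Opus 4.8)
The plan is to compute $Q_{i_1,\dots,i_e}^m = t^{\sum_\ell (p-i_\ell-1)p^{\ell-1}}\,F^e(\delta_m)$ directly, pushing the power of $t$ through the Frobenius image step by step. First I would record that, since $\nu_e(a\otimes u) = a u^{p^e}$, we have $F^e(\delta_m) = \delta_m^{p^e}$ in the localization $R[t]_{f-t}$; concretely $\delta_m$ is the class of $(f-t)^{-(m+1)}$, so $F^e(\delta_m)$ is the class of $(f-t)^{-(m+1)p^e}$, which modulo $R[t]$ is $\delta_{(m+1)p^e-1}$ up to a nonzero scalar — but it is cleaner to keep it as $(f^{p^e}-t^{p^e})^{-(m+1)}$ and use the relations \eqref{eq4_1}, \eqref{eq4_2}. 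The key mechanism is the iterated relation $t^{p^e}\cdot\delta_k = f^{p^e}\delta_k - \delta_{k-p^e}$, which I will apply with the single exponent $M := \sum_{\ell=1}^e (p-i_\ell-1)p^{\ell-1}$; since $0\le M\le p^e-1$, I would instead split $t^M = \prod_{\ell=1}^e (t^{p^{\ell-1}})^{p-i_\ell-1}$ and apply the rank-one relations for each power of $p$ separately.

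The main computation is an induction on $e$. For the base case $e=1$, $Q_{i_1}^m = t^{p-i_1-1}F(\delta_m)$; using that $t\cdot\delta_k = f\delta_k - \delta_{k-1}$ and that $F(\delta_m)$ is (a scalar times) $\delta_{(m+1)p-1}$, one expands $t^{p-i_1-1}$ acting on a single $\delta$, getting $t^a\cdot\delta_N = \sum_{j=0}^{a}(-1)^j\binom{a}{j}f^{a-j}\delta_{N-j}$ by the binomial theorem (the operators "multiply by $f$" and "shift down by one" commute). Matching $N = (m+1)p-1$ and $a = p-i_1-1$, reindexing $j\mapsto$ the complementary variable and tracking the Frobenius-semilinearity scalar on $F(\delta_m)$ — here \eqref{eq4_2} with the Lucas-theorem evaluation of $\binom{mp+p^0\cdot(\cdots)}{p^0}$-type coefficients enters — yields exactly \eqref{eq4_4} for $e=1$. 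For the inductive step, I would write $\nu_e = \nu_{e-1}\circ(\mathrm{id}\otimes\nu_1)$ appropriately, or more simply observe $Q^m_{i_1,\dots,i_e}$ is obtained from $Q^{\,m}_{i_1,\dots,i_{e-1}}$-type data by one further application of $t^{p-i_e-1}p^{e-1}}\cdot F^{\,}(-)$ with all $t$-powers now divisible by $p^{e-1}$; the relations \eqref{eq4_1}–\eqref{eq4_2} with $p^e$ replaced by the relevant $p^{\ell-1}$ let the various $t^{p^{\ell-1}}$ factors act on disjoint "digit slots" of the index, producing the product of binomial coefficients $\prod_\ell \binom{i_\ell+j_\ell}{i_\ell}$ and the monomial $f^{\sum_\ell j_\ell p^{\ell-1}}$, with the sign $(-1)^{i_1+\dots+i_e}$ accumulating one factor $(-1)^{i_\ell}$ per digit from the "shift down" term in \eqref{eq4_1}.

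The bookkeeping obstacle — and the step I expect to be genuinely delicate — is verifying that the subscript on $\delta$ comes out as $mp^e + \sum_\ell (i_\ell+j_\ell)p^{\ell-1}$ rather than something off by a power of $p$, because $F^e(\delta_m)$ naturally lands in degree $(m+1)p^e - 1 = mp^e + (p^e-1) = mp^e + \sum_\ell (p-1)p^{\ell-1}$, and then multiplying by $t^M$ with $M = \sum_\ell(p-i_\ell-1)p^{\ell-1}$ must shave off exactly $\sum_\ell(p-i_\ell-1)p^{\ell-1}$ from the index while also creating the higher-index terms $\delta_{mp^e + \sum_\ell(i_\ell+j_\ell)p^{\ell-1}}$ as $j_\ell$ ranges over $0,\dots,p-i_\ell-1$. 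I would handle this by carefully applying, for each $\ell$ from $1$ to $e$, the identity $(t^{p^{\ell-1}})^{a}\cdot\delta_N = \sum_{j=0}^{a}(-1)^j\binom{a}{j}f^{\,j p^{\ell-1}}\delta_{N - j p^{\ell-1}}$ (iterated \eqref{eq4_1}), substituting $a = p-i_\ell-1$ and reindexing via $j \rightsquigarrow p-i_\ell-1-j_\ell$ so that $\binom{p-i_\ell-1}{\,p-i_\ell-1-j_\ell\,} = \binom{p-i_\ell-1}{j_\ell}$ — and then reconciling this with the target binomial $\binom{i_\ell+j_\ell}{i_\ell}$ using Lucas' theorem together with the normalizing scalar $\binom{m+p^e}{p^e}$ etc. coming from \eqref{eq4_2}; the constraint $0\le j_\ell\le p-i_\ell-1$ in the statement is precisely the range of this reindexed summation, which is what makes the match exact.
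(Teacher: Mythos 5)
Your plan — expand $Q_{i_1,\dots,i_e}^m = t^M F^e(\delta_m)$ directly using the iterated relation $t^{p^{\ell-1}}\cdot\delta_k=f^{p^{\ell-1}}\delta_k-\delta_{k-p^{\ell-1}}$, then reindex — is correct and does lead to \eqref{eq4_4}, but it runs in the opposite direction from the paper. The paper evaluates the \emph{right-hand side} of \eqref{eq4_4}: it writes it as $\nu_e(h\otimes\delta_m)$ for an explicit $h\in R[t]$ and shows $h=\prod_\ell t^{(p-1-i_\ell)p^{\ell-1}}$ by summing a geometric-type series at $x=f/(f-t)$ using Lemma~\ref{lem4_3}. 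Your forward expansion of the left-hand side works, but after reindexing $r_\ell=a_\ell-j_\ell$ (with $a_\ell=p-1-i_\ell$) you are left to verify
\[
(-1)^{a_\ell-j_\ell}\binom{a_\ell}{j_\ell}\ \equiv\ (-1)^{i_\ell}\binom{i_\ell+j_\ell}{i_\ell}\pmod p,
\]
equivalently $\binom{p-1-i_\ell}{j_\ell}\equiv(-1)^{j_\ell}\binom{i_\ell+j_\ell}{i_\ell}$. This is \emph{not} Lucas' theorem: it is the ``reflection'' congruence, and it is exactly the coefficient-by-coefficient content of Lemma~\ref{lem4_3} (expand $(1-x)^{p-1-i}$ and compare). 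So your argument in fact needs the same Lemma~\ref{lem4_3} that the paper uses — just invoked pointwise rather than as a power series identity. Replacing the appeal to Lucas with this reflection identity closes the gap in your binomial reconciliation step.

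Two smaller corrections. First, your computation involves only multiplication by $t$, never $\partial_t^{[p^e]}$, so equation \eqref{eq4_2} and the ``normalizing scalar $\binom{m+p^e}{p^e}$'' play no role; you can delete that part of the argument. Second, your displayed iterated formula $(t^{p^{\ell-1}})^{a}\cdot\delta_N=\sum_{j=0}^a(-1)^j\binom{a}{j}f^{jp^{\ell-1}}\delta_{N-jp^{\ell-1}}$ has the exponent of $f$ wrong — it should be $f^{(a-j)p^{\ell-1}}$, consistent with the $e=1$ version you wrote earlier; with this fix the reindexing produces $f^{j_\ell p^{\ell-1}}$ as desired. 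With these repairs your forward computation is a valid alternative proof: slightly more bookkeeping than the paper's backward verification, but more transparent about where each binomial coefficient and sign arises.
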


\begin{proof}
The right-hand side of (\ref{eq4_4}) is equal to
$\nu_e(h\otimes\delta_{m})$, where
$$h=(-1)^{i_1+\cdots+i_e}\prod_{\ell=1}^e\left(\sum_{j_{\ell}=0}^{p-i_{\ell}-1} {{i_{\ell}+j_{\ell}}\choose{i_{\ell}}}
f^{j_{\ell}p^{\ell-1}}(f-t)^{p^{\ell-1}(p-i_{\ell}-j_{\ell}-1)}\right).$$
Consider now
$$h_{\ell}:=\sum_{j_{\ell}=0}^{p-i_{\ell}-1}{{i_{\ell}+j_{\ell}}\choose{i_{\ell}}}f^{j_{\ell}}(f-t)^{p-i_{\ell}-j_{\ell}-1}.$$
It follows from Lemma~\ref{lem4_3} below that we may write in the
fraction field of $R[t]$
$$h_{\ell}=(f-t)^{p-1-i_{\ell}}\sum_{j_{\ell}=0}^{p-i_{\ell}-1}
{{i_{\ell}+j_{\ell}}\choose{i_{\ell}}}\left(\frac{f}{f-t}\right)^{j_{\ell}}=
(f-t)^{p-1-i_{\ell}}\cdot\left(1-\frac{f}{f-t}\right)^{p-1-i_{\ell}}=(-t)^{p-1-i_{\ell}}.$$
We deduce that
$h=\prod_{\ell=1}^e\left(t^{p-1-i_{\ell}}\right)^{p^{\ell-1}}$,
which implies the formula in the lemma.
\end{proof}

\begin{lemma}\label{lem4_3}
We have the following identity in the polynomial ring $\FF_p[x]$
$$\sum_{j=0}^{p-i-1}{{i+j}\choose i}x^j=(1-x)^{p-i-1}$$
for every $i\in\{0,\ldots,p-1\}$.
\end{lemma}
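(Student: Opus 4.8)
The identity to prove is $\sum_{j=0}^{p-i-1}\binom{i+j}{i}x^j=(1-x)^{p-i-1}$ in $\FF_p[x]$, for $0\le i\le p-1$. The plan is to recognize the left-hand side as a truncated Taylor expansion that, in characteristic $p$, actually terminates, and to match coefficients on both sides. Writing $n:=p-1-i$, so that $0\le n\le p-1$, the claim becomes $\sum_{j=0}^{n}\binom{i+j}{i}x^j=(1-x)^{n}$. First I would expand the right-hand side by the binomial theorem: $(1-x)^n=\sum_{j=0}^{n}\binom{n}{j}(-1)^j x^j$, a polynomial of degree exactly $n$, matching the degree of the left-hand side. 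So it suffices to check the coefficient identity $\binom{i+j}{i}\equiv(-1)^j\binom{n}{j}\pmod p$ for each $0\le j\le n$.

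The main step is this congruence, which I would derive from the elementary negative-binomial identity over $\ZZ$. Indeed, $\binom{i+j}{i}=\binom{i+j}{j}=(-1)^j\binom{-i-1}{j}$ (the standard ``upper negation'' identity, valid in $\ZZ$ since $\binom{-i-1}{j}=\frac{(-i-1)(-i-2)\cdots(-i-j)}{j!}=(-1)^j\frac{(i+1)\cdots(i+j)}{j!}$). Now reduce mod $p$: since $n=p-1-i$, we have $-i-1\equiv n\pmod p$, and more precisely for $0\le j\le n$ all the factors $-i-1,-i-2,\dots,-i-j$ in the numerator of $\binom{-i-1}{j}$ are congruent mod $p$ to $n,n-1,\dots,n-j+1$, none of which is divisible by $p$ when $j\le n\le p-1$; hence $\binom{-i-1}{j}\equiv\binom{n}{j}\pmod p$. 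Combining, $\binom{i+j}{i}\equiv(-1)^j\binom{n}{j}\pmod p$, which is exactly what is needed. As a cleaner alternative I could invoke Lucas' theorem (already recalled in the paper) directly to compare $\binom{i+j}{i}$ with $\binom{p-1-i}{j}$, but the negative-binomial route is the most transparent.

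I do not expect a serious obstacle here; the only point requiring care is keeping the range of $j$ under control so that no numerator factor in $\binom{-i-1}{j}$ vanishes mod $p$ — this is where the hypothesis $j\le p-1-i$ is used, and it is precisely why the truncated sum on the left equals the full polynomial $(1-x)^{p-1-i}$ rather than an infinite power series. Once the coefficientwise congruence is in hand, summing over $j$ from $0$ to $n$ gives the asserted polynomial identity in $\FF_p[x]$, completing the proof.
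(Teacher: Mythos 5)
Your proof is correct, and it takes a genuinely different route from the paper's. You match coefficients term by term: after rewriting $\binom{i+j}{i}=(-1)^j\binom{-i-1}{j}$ via upper negation and observing that for $0\le j\le n=p-1-i$ the $j$ numerator factors $-i-1,\dots,-i-j$ reduce mod $p$ to $n,n-1,\dots,n-j+1$ (all nonzero), you get $\binom{i+j}{i}\equiv(-1)^j\binom{n}{j}\pmod p$, which matches the binomial expansion of $(1-x)^n$. The paper instead argues globally: the left-hand side is $\frac{1}{i!}\bigl(\sum_{k=0}^{p-1}x^k\bigr)^{(i)}$, and since $\sum_{k=0}^{p-1}x^k=\frac{1-x^p}{1-x}=(1-x)^{p-1}$ in $\FF_p[x]$, differentiating $i$ times and dividing by $i!$ gives $(1-x)^{p-1-i}$ after checking that the constant $(-1)^i(p-1)(p-2)\cdots(p-i)/i!$ reduces to $1$ mod $p$. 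Both approaches use that $i!$ and $j!$ are invertible in $\FF_p$ when $i,j\le p-1$, which is where the hypothesis enters in each case. Your version is more elementary and self-contained (no derivatives, pure coefficient bookkeeping), while the paper's is shorter once one spots the generating-function identity $\sum_{k=0}^{p-1}x^k=(1-x)^{p-1}$. One small quibble with your side remark: Lucas' theorem does not by itself give the congruence $\binom{i+j}{i}\equiv(-1)^j\binom{p-1-i}{j}$, since all the integers involved ($i+j$, $i$, $p-1-i$, $j$) are single base-$p$ digits and Lucas is vacuous in that range; the upper-negation computation you actually carried out is the substantive step.
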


\begin{proof}
We have
$$\sum_{j=0}^{p-i-1}{{i+j}\choose
i}x^j=\frac{1}{i!}\left(\sum_{j=0}^{p-1}x^j\right)^{(i)}
=\frac{1}{i!}\left(\frac{1-x^p}{1-x}\right)^{(i)}=
\frac{1}{i!}\left((1-x)^{p-1}\right)^{(i)}$$
$$=(-1)^i\frac{(p-1)(p-2)\cdots (p-i)}{i!}(1-x)^{p-1-i}=(1-x)^{p-1-i}.$$
\end{proof}

We can now describe the decomposition of $B_f$ under the action of the Euler operators.

\begin{theorem}\label{thm4_1}
For every $e\geq 1$, and $i_1,\ldots,i_e\in\{0,\ldots,p-1\}$, the set
$\{Q_{i_1,\ldots,i_e}^m\vert m\geq 0\}$ gives an $R$-basis of $(B_f)_{i_1,\ldots,i_e}$.
Moreover, if $1\leq\ell\leq e$, then the following hold:
\begin{enumerate}
\item[i)] $\partial_t^{[p^{\ell-1}]}\cdot
Q_{i_1,\ldots,i_e}^m=-(i_{\ell}+1)Q_{i_1,\ldots
i_{\ell}+1,\ldots,i_e}^m$ $($when $i_{\ell}=p-1$, this expression is
understood to be zero$)$.
\item[ii)]
\[
t^{p^{\ell-1}}\cdot Q_{i_1,\ldots,i_e}^m = \left\{
\begin{array}{cl}
Q_{i_1,\ldots,i_{\ell}-1\ldots,i_e}^m, & \text{if}\,\,i_{\ell}\neq 0; \\[2mm]
Q_{i_1,\ldots,p-1,i_{\ell+1}-1,\ldots, i_e}^m, & \text{if}\,\,i_{\ell}=0,\,i_{\ell+1}\neq 0; \\[2mm]
\vdots & \vdots \\[2mm]
Q_{i_1,\ldots,p-1,\ldots,p-1,i_e-1}^m, & \text{if}\,\,i_{\ell}=\ldots=i_{e-1}=0,\,i_e\neq 0; \\[2mm]
f^{p^e}Q_{i_1,\ldots,i_{\ell-1},p-1,\ldots,p-1}^m-
Q_{i_1,\ldots,i_{\ell-1},p-1,\ldots,p-1}^{m-1}, &
\text{if}\,\,i_{\ell}=\ldots=i_e=0
\end{array}\right.
\]
$($where we put $Q^{-1}_{j_1,\ldots,j_e}=0$ for every
$j_1,\ldots,j_e$$)$.
\item[iii)] $R\cdot Q_{i_1,\ldots,i_e}^m$ is a $D_{R}^e$-submodule
of $B_f$, isomorphic to $R$ by an isomorphism that takes
$Q_{i_1,\ldots,i_e}^m$ to $1$.
\end{enumerate}
\end{theorem}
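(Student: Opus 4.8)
The plan is to prove the three parts in the order stated, using the explicit formula from Lemma~\ref{lem4_2} together with the structural results of \S 4 (Propositions~\ref{decomp1} and~\ref{prop5_3}, Remark~\ref{rem5_11}, and Lemma~\ref{lem_F_module1}).

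First, for the claim that $\{Q_{i_1,\ldots,i_e}^m\mid m\geq 0\}$ is an $R$-basis of $(B_f)_{i_1,\ldots,i_e}$: inspecting formula~(\ref{eq4_4}), the element $Q_{i_1,\ldots,i_e}^m$ is a combination of $\delta_k$ with $k$ ranging over $mp^e + (i_1+j_1) + \cdots + (i_e+j_e)p^{e-1}$ for $0 \le j_\ell \le p-i_\ell-1$, i.e. over $k$ with $mp^e \le k \le (m+1)p^e - 1$; moreover the coefficient of $\delta_{mp^e + i_1 + \cdots + i_e p^{e-1}}$ (the $j_\ell = 0$ term) is $(-1)^{i_1+\cdots+i_e}$, a unit. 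Since the blocks $[mp^e, (m+1)p^e-1]$ are disjoint as $m$ varies, the $Q_{i_1,\ldots,i_e}^m$ are $R$-linearly independent. For spanning: by Lemma~\ref{lem_F_module1}, $(B_f)_{i_1,\ldots,i_e}$ is generated as an $R$-module by $t^{\sum(p-i_\ell-1)p^{\ell-1}}F^e(B_f)$; since $B_f$ is a free $R$-module on the $\delta_m$ and $F^e$ is semilinear, $F^e(B_f)$ is $R^{p^e}$-spanned by $F^e(\delta_m) = \nu_e(1\otimes\delta_m)$, whence the $R$-module is spanned by the $\nu_e(t^{\sum(p-i_\ell-1)p^{\ell-1}}\otimes\delta_m) = Q_{i_1,\ldots,i_e}^m$. (Alternatively, one can argue directly that the $Q$'s together span $B_f$ by a triangularity/counting argument over the blocks.)

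Next, parts i) and ii). For i): by Proposition~\ref{prop5_3} i), $\partial_t^{[p^{\ell-1}]}$ carries $(B_f)_{i_1,\ldots,i_e}$ into $(B_f)_{i_1,\ldots,i_\ell+1,\ldots,i_e}$, so by the basis statement already proved we must have $\partial_t^{[p^{\ell-1}]}\cdot Q_{i_1,\ldots,i_e}^m = \sum_n a_n Q_{i_1,\ldots,i_\ell+1,\ldots,i_e}^n$ for some $a_n\in R$; comparing the lowest $\delta$-index term on both sides using~(\ref{eq4_2}) and~(\ref{eq4_4}) — the relevant binomial coefficient $\binom{\cdot}{p^{\ell-1}}$ reduces via Lucas to $i_\ell+1$ — pins down $a_n = -(i_\ell+1)\delta_{n,m}$ and gives the sign. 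For ii): the first four cases follow identically from Proposition~\ref{prop5_3} ii) plus comparison of leading terms; the only genuinely different case is $i_\ell = \cdots = i_e = 0$, where $t^{p^{\ell-1}}$ maps $(B_f)_{i_1,\ldots,i_{\ell-1},0,\ldots,0}$ into $(B_f)_{i_1,\ldots,i_{\ell-1},p-1,\ldots,p-1}$ but with a possible $\delta$-index shift by $p^e$, so one must allow a combination of $Q^m$ and $Q^{m-1}$; using the relation~(\ref{eq4_1}), $t^{p^e}\delta_m = f^{p^e}\delta_m - \delta_{m-p^e}$, and matching the two leading blocks yields the coefficients $f^{p^e}$ and $-1$. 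It is cleanest to first dispose of $\ell=1$ directly from the defining formula for $Q$ and~(\ref{eq4_1})/(\ref{eq4_2}), then get general $\ell$ by iterating $p^{\ell-1}$ times exactly as in the proof of Proposition~\ref{prop5_3}.

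Finally, part iii): by Remark~\ref{rem5_11} each component $(B_f)_{i_1,\ldots,i_e}$ is a $D_R^e[t^{p^e}]$-submodule, and by part ii) (applied $e$ times, i.e. multiplication by $t^{p^e}$) and the structure of the basis we see that $R\cdot Q_{i_1,\ldots,i_e}^m$ is stable under $D_R^e$ — indeed the $D_R^e$-action commutes with $F^e$-semilinear operations and $Q^m = t^{(\ldots)}F^e(\delta_m)$, so $D_R^e$ acts $R$-linearly on the rank-one free $R$-module $R\cdot Q^m$; the map $R\cdot Q^m \to R$, $Q^m\mapsto 1$, is then the asserted isomorphism of $D_R^e$-modules. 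I expect the main obstacle to be the bookkeeping in case ii) for $i_\ell = \cdots = i_e = 0$: tracking precisely how the $\delta$-index shifts across block boundaries and confirming that exactly the two terms $f^{p^e}Q^m - Q^{m-1}$ appear (no lower-order corrections) requires careful use of~(\ref{eq4_1}) and the disjointness of the index blocks established in the first step.
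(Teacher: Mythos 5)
Your overall outline is sound, and the basis argument goes through: linear independence via the disjoint $\delta$-index blocks $[mp^e,(m+1)p^e-1]$ and the unit coefficient on the $j=0$ term of~(\ref{eq4_4}), spanning via Lemma~\ref{lem_F_module1} together with $F^e$-semilinearity. This is a legitimate alternative to the paper, which instead shows all the $Q_{i_1,\ldots,i_e}^m$ together form an $R$-basis of $B_f$ by a triangular change-of-basis argument inside each block, and then restricts to the components. Where you diverge more substantially is in i) and ii): your primary proposal is an indirect comparison-of-leading-terms argument, using Proposition~\ref{prop5_3} to locate the target component and then matching $\delta$-coefficients. This can be made to work, but (as you yourself note) it forces you to rule out contributions from $Q^{m\pm 1}$ and to track carrying in the $p$-adic digits when computing $\binom{k+p^{\ell-1}}{p^{\ell-1}}$ for the non-leading terms. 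The paper avoids all of this by computing through the isomorphism $\nu_e$: since $P\cdot Q_{i_1,\ldots,i_e}^m=\nu_e\bigl((P\bullet t^n)\otimes\delta_m\bigr)$ for $n=\sum_\ell(p-i_\ell-1)p^{\ell-1}$, everything reduces to the action of $\partial_t^{[p^{\ell-1}]}$ and $t^{p^{\ell-1}}$ on the single monomial $t^n$, with Lucas giving the coefficient in i) and the wrap-around case of ii) falling out of $\nu_e(t^{p^e}h\otimes\delta_m)=\nu_e(h\otimes t\delta_m)=f^{p^e}\nu_e(h\otimes\delta_m)-\nu_e(h\otimes\delta_{m-1})$ via~(\ref{eq4_1}). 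That direct route is both shorter and immune to the bookkeeping you flag, and is in fact the alternative you call ``cleanest'' at the end of your paragraph — it is the one to carry out. For iii), the invocation of Remark~\ref{rem5_11} and part ii) is a red herring: $t^{p^e}$-stability and $D_R^e$-stability are unrelated. The actual content, which you gesture at with ``$D_R^e$ acts $R$-linearly,'' is precisely that for $P\in D_R^e$ one has $P\bullet t^n=(P\bullet 1)\,t^n$ (since $t^n\in\FF_p[t]$ and $P$ involves no $t$-derivatives), hence $P\cdot Q_{i_1,\ldots,i_e}^m=\nu_e\bigl((P\bullet 1)t^n\otimes\delta_m\bigr)=(P\bullet 1)\,Q_{i_1,\ldots,i_e}^m$; that one-line computation is the whole of iii) and should be stated explicitly.
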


\begin{proof}
We claim that the $Q_{i_1,\ldots,i_e}^m$, when $i_1,\ldots,i_e$, and $m$ vary,
give an $R$-basis of $B_f$.  Indeed, we see that in Lemma~\ref{lem4_2},
the term in (\ref{eq4_4}) corresponding to
$j_1=\ldots=j_e=0$ is
$$(-1)^{i_1+\ldots+i_e}\delta_{mp^e+i_1+\cdots+i_ep^{e-1}},$$ and all
the other terms are in the linear span of the
$\delta_{mp^e+i'_1+\cdots+i'_ep^{e-1}}$, where $i_{\ell}\leq
i'_{\ell}\leq p-1$ for all $\ell$, and $i'_{\ell}>i_{\ell}$ for some
$\ell$. Since  the $\delta_i$ with $i\geq 0$ give  an $R$-basis of $B_f$, we
deduce our claim. Since each $Q_{i_1,\ldots,i_e}^m$ lies in $(B_f)_{i_1,\ldots,i_e}$,
we get the first assertion in the theorem.

If $P\in D_{R[t]}^e$, we may compute $P\cdot Q_{i_1,\ldots,i_e}^m$
as $\nu_e\left(P\bullet
t^{\sum_{\ell=1}^e(p-i_{\ell}-1)p^{\ell-1}}\otimes\delta_{m}\right)$.
If $P\in D_R^e$ and $h\in \FF_p[t]\subseteq R[t]$, then $P\bullet
h=h(P\bullet 1)$. Therefore $P\cdot Q_{i_1,\ldots,i_e}^m=(P\bullet
1)Q_{i_1,\ldots,i_e}^m$, which implies iii).

Note that if $1\leq \ell\leq e$, and if we write
$n=i_1+\cdots+i_ep^{e-1}+mp^e$, with
$i_1,\ldots,i_e\in\{0,\ldots,p-1\}$ and $m\geq 0$, then
$$\partial_t^{[p^{\ell-1}]}\bullet
t^n={n\choose{p^{\ell-1}}}t^{n-p^{\ell-1}}=i_{\ell}t^{n-p^{\ell-1}}$$
(the second equality follows from Lucas' Theorem). If we take $n=
\sum_{\ell'=1}^e(p-i_{\ell'}-1)p^{\ell'-1}$, then we get
$$\partial_t^{[p^{\ell-1}]}\cdot
Q_{i_1,\ldots,i_e}^m=\nu_e\left(\left(\partial_t^{[p^{\ell-1}]}\bullet
t^n\right)\otimes\delta_m\right)=
(p-i_{\ell}-1)\nu_e(t^{n-p^{\ell-1}}\otimes\delta_m)=-(i_{\ell}+1)Q_{i_1,\ldots,i_{\ell}+1,\ldots,i_e},$$
hence i). We also have
$$t^{p^{\ell-1}}\cdot
Q_{i_1,\ldots,i_e}^m=\nu_e\left(t^{(p-i_1-1)+\ldots+(p-(i_{\ell}-1)+1)p^{\ell-1}+\ldots+(p-i_e-1)p^{e-1}}\otimes
\delta_{m}\right).$$ The formula in ii) is an immediate consequence.
\end{proof}

\begin{remark}
It follows from the formula in Lemma~\ref{lem4_2} that the
$Q_{i_1,\ldots,i_e}^m$ with $0\leq i_{\ell}\leq p-1$ for all $\ell$,
and with $m\leq m_0$, give an $R$-basis of the
$D_{R[t]}^e$-submodule $\bigoplus_{i\leq
(m_0+1)p^e-1}R\cdot\delta_i$.
\end{remark}

\begin{remark}\label{transformation}
It would be useful to have an explicit formula for the change of
basis when we replace $e$ by $e+1$. In the case $m=0$ we have the
following formula:
\begin{equation}\label{eq_transformation}
Q_{i_1,\ldots,i_e}^0=\sum_{j=0}^{p-1}(-1)^j{{p-1}\choose
{j}}f^{jp^e}Q_{i_1,\ldots,i_e,j}^0
\end{equation}
for every $i_1,\ldots,i_e\in\{0,1,\ldots,p-1\}$. Indeed, we have
$$Q_{i_1,\ldots,i_e}^0=t^{\sum_{\ell=1}^e(p-i_{\ell}-1)p^{\ell-1}}\cdot\frac{1}{(f-t)^{p^e}}
=t^{\sum_{\ell=1}^e(p-i_{\ell}-1)p^{\ell-1}}(f-t)^{p^e(p-1)}\cdot\frac{1}{(f-t)^{p^{e+1}}}$$
$$=\sum_{j=0}^{p-1}(-1)^{p-1-j}{{p-1}\choose{j}}f^{jp^e}t^{p^e(p-j-1)+\sum_{\ell=1}^e(p-i_{\ell}-1)p^{\ell-1}}\cdot
\frac{1}{(f-t)^{p^{e+1}}}$$ $$=\sum_{j=0}^{p-1}(-1)^j{{p-1}\choose
{j}}f^{jp^e}Q_{i_1,\ldots,i_e,j}^0.$$
\end{remark}

\section{Bernstein-Sato polynomials in positive characteristic}

We keep the notation in the previous section. Motivated by the
analogy with the situation described in \S 2, we study some modules
over rings of differential operators of $R$. For every positive
integer $e$, consider the $D_R^e$-submodule of $B_f$
$$M_{f}^e:=D_R^e[\theta_1,\theta_p,\ldots,\theta_{p^{e-1}}]\cdot \delta.$$
The union of all $M_f^e$ is the $D_R$-module
$$M_f:=\underrightarrow{\rm lim}_e M_f^e=D_R[\theta_{p^i}\mid i\geq
0]\cdot\delta.$$
 We use the decomposition in
Theorem~\ref{thm4_1} to give an explicit description of  $M_f^e$.

\begin{proposition}\label{prop5_1}
With the above notation, we have
\begin{equation}\label{eq5_2}
M_{f}^e=\bigoplus_{i_1,\ldots,i_e=0}^{p-1}\left(D_R^e\bullet
f^{i_1+i_2p+\cdots+i_{e}p^{e-1}}\right) Q_{i_1,\ldots,i_e}^0.
\end{equation}
\end{proposition}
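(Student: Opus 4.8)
The plan is to play off the eigenspace decomposition $B_f=\bigoplus_{i_1,\ldots,i_e}(B_f)_{i_1,\ldots,i_e}$ of Proposition~\ref{decomp1} against the change of basis in Remark~\ref{transformation}. Write $\delta=\sum_{i_1,\ldots,i_e}\delta^{(i_1,\ldots,i_e)}$ for the decomposition of $\delta$ into components, $\delta^{(i_1,\ldots,i_e)}\in(B_f)_{i_1,\ldots,i_e}$, and set $N=i_1+i_2p+\cdots+i_ep^{e-1}$. The first step will be to show $M_f^e=\bigoplus_{i_1,\ldots,i_e}D_R^e\cdot\delta^{(i_1,\ldots,i_e)}$. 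Since $\theta_1,\theta_p,\ldots,\theta_{p^{e-1}}$ commute with one another (Lemma~\ref{lem5_2}(vi)) and with $D_R^e$, any element of $D_R^e[\theta_1,\theta_p,\ldots,\theta_{p^{e-1}}]$ can be written $\sum_\alpha P_\alpha\theta_1^{a_1^\alpha}\cdots\theta_{p^{e-1}}^{a_e^\alpha}$ with $P_\alpha\in D_R^e$; as $\theta_{p^{\ell-1}}$ acts on $(B_f)_{i_1,\ldots,i_e}$ by the scalar $-i_\ell$ and $D_R^e$ preserves each component (Proposition~\ref{decomp1}), applying such an operator to $\delta$ and sorting by components gives $M_f^e\subseteq\bigoplus_{i_1,\ldots,i_e}D_R^e\cdot\delta^{(i_1,\ldots,i_e)}$. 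For the reverse inclusion, the operator $\prod_{\ell=1}^{e}\prod_{j\ne i_\ell}(j-i_\ell)^{-1}(\theta_{p^{\ell-1}}+j)$ lies in $\FF_p[\theta_1,\theta_p,\ldots,\theta_{p^{e-1}}]$ and, because $\prod_{j=0}^{p-1}(\theta_{p^{\ell-1}}+j)=0$ (shown inside Proposition~\ref{decomp1}), acts as the projection of $B_f$ onto the summand $(B_f)_{i_1,\ldots,i_e}$; hence $\delta^{(i_1,\ldots,i_e)}\in M_f^e$, and equality follows.

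Next I would identify each component $\delta^{(i_1,\ldots,i_e)}$ explicitly. Iterating the identity of Remark~\ref{transformation}, starting from its $e=0$ instance $\delta=\delta_0=\sum_{j=0}^{p-1}(-1)^j{{p-1}\choose{j}}f^jQ_{j}^0$ (obtained by the same computation), telescopes after $e$ steps to
$$\delta=\sum_{i_1,\ldots,i_e=0}^{p-1}\Big(\prod_{\ell=1}^e(-1)^{i_\ell}{{p-1}\choose{i_\ell}}\Big)\,f^{N}\,Q_{i_1,\ldots,i_e}^0,$$
the $\ell$-th step contributing the factor $(-1)^{i_\ell}{{p-1}\choose{i_\ell}}f^{i_\ell p^{\ell-1}}$, so that the $f$-exponents add up to $N$. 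Since $Q_{i_1,\ldots,i_e}^0\in(B_f)_{i_1,\ldots,i_e}$ by Theorem~\ref{thm4_1} and the decomposition is direct, comparing with $\delta=\sum\delta^{(i_1,\ldots,i_e)}$ gives $\delta^{(i_1,\ldots,i_e)}=u_{i_1,\ldots,i_e}\,f^{N}\,Q_{i_1,\ldots,i_e}^0$, where $u_{i_1,\ldots,i_e}=\prod_\ell(-1)^{i_\ell}{{p-1}\choose{i_\ell}}\in\FF_p$ is a unit — indeed equal to $1$, since ${{p-1}\choose{k}}\equiv(-1)^k\pmod p$ by Lucas' Theorem.

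Finally, Theorem~\ref{thm4_1}(iii) identifies $R\cdot Q_{i_1,\ldots,i_e}^0$ with $R$ as a $D_R^e$-module via $Q_{i_1,\ldots,i_e}^0\mapsto1$, so $D_R^e\cdot(h\,Q_{i_1,\ldots,i_e}^0)=(D_R^e\bullet h)\,Q_{i_1,\ldots,i_e}^0$ for every $h\in R$; taking $h=u_{i_1,\ldots,i_e}\,f^{N}$ and using that $u_{i_1,\ldots,i_e}$ is a unit (so $D_R^e\bullet(u_{i_1,\ldots,i_e}\,f^{N})=D_R^e\bullet f^{N}$) yields $D_R^e\cdot\delta^{(i_1,\ldots,i_e)}=(D_R^e\bullet f^{N})\,Q_{i_1,\ldots,i_e}^0$, and substituting into the first step produces precisely (\ref{eq5_2}). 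I do not expect a real obstacle: the only computation of substance is the telescoping of Remark~\ref{transformation} in the second paragraph, and once the components of $\delta$ are pinned down the rest is a formal manipulation of the decomposition together with Theorem~\ref{thm4_1}(iii).
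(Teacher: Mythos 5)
Your proof is correct and takes essentially the same route as the paper's: both expand $\delta$ in the $Q_{i_1,\ldots,i_e}^0$ basis with coefficients that are nonzero scalar multiples of $f^{i_1+i_2p+\cdots+i_ep^{e-1}}$, and then read off the decomposition of $M_f^e$ from the eigenspace structure of the summands $R\cdot Q_{i_1,\ldots,i_e}^0$. The only differences are cosmetic: the paper obtains the expansion of $\delta$ in one step from $\nu_e\bigl((f-t)^{p^e-1}\otimes\delta\bigr)$ whereas you iterate Remark~\ref{transformation}, and you spell out the Lagrange-interpolation projectors in $\FF_p[\theta_1,\ldots,\theta_{p^{e-1}}]$ that the paper leaves implicit in its ``therefore.''
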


\begin{proof}
We first show that
\begin{equation}\label{eq5_3}
\delta=\sum_{i_1,\ldots,i_e=0}^{p-1}(-1)^{i_1+\cdots+i_e}
{{p^e-1}\choose{i_1+i_2p+\cdots+i_{e}p^{e-1}}}f^{i_1+i_2p+\cdots+i_ep^{e-1}}Q_{i_1,\ldots,i_e}^0.
\end{equation}
To see this, note that
$$\delta=\nu_e\left((f-t)^{p^e-1}\otimes\delta\right)$$
$$=\nu_e\left( \sum_{i_1,\ldots,i_e=0}^{p-1}{{p^e-1}\choose
i_1+i_2p+\cdots+i_ep^{e-1}} f^{i_1+i_2p+\cdots+i_ep^{e-1}}\cdot
(-t)^{(p-1-i_1)+\cdots+p^{e-1}(p-1-i_e)}\otimes\delta\right)
$$
$$=\sum_{i_1,\ldots,i_e=0}^{p-1}(-1)^{i_1+\cdots+i_e}{{p^e-1}\choose
{i_1+i_2p+\cdots+i_ep^{e-1}}}f^{i_1+\cdots+i_e p^{e-1}}
Q_{i_1,\ldots,i_e}^0.$$

Note now that the binomial coefficients in (\ref{eq5_3}) are all
different from zero. Indeed, it follows from Lucas' Theorem that
$${{p^e-1}\choose{i_1+i_2p+\cdots+i_{e}p^{e-1}}}\equiv \prod_{\ell=1}^e
{{p-1}\choose i_{\ell}}\,\,({\rm mod}\,p).$$
 By
Theorem~\ref{thm4_1}, each $R\cdot Q_{i_1,\ldots,i_e}^0$ is an
eigenspace of $\theta_{p^{\ell-1}}$ with eigenvalue $-i_{\ell}$, and
therefore $M_{f}^e$ is the direct sum of its intersections with the
$R\cdot Q_{i_1,\ldots,i_e}^0$. Since we have an isomorphism of
$D_R^e$-modules $R\simeq R\cdot Q_{i_1,\ldots,i_e}^0$ that takes $1$
to $Q_{i_1,\ldots,i_e}^0$, we get the decomposition (\ref{eq5_2}).
\end{proof}

\begin{remark}\label{rem5_1}
It is easy to show that the $D_R^e$-submodules of $R$ are precisely
the ideals of the form $J^{[p^e]}$, for some ideal $J$ of $R$ (see,
for example, Lemma 2.2 in \cite{BMSm1}). Using the notation in \S 3,
we see that for every $g\in R$
$$D_R^e\bullet g=\left(g^{[1/p^e]}\right)^{[p^e]}.$$
\end{remark}

\begin{remark}\label{cor5_1}
The subring $D_R^e[\theta_1,\theta_p,\ldots,\theta_{p^{e-1}}]$ of
$D_{R[t]}^e$ contains all $\theta_m$ with $m<p^e$. This is an
immediate consequence of the formula in Lemma~\ref{lem5_2} vii).
\end{remark}

We will be interested in the action of the operators
$\theta_1,\theta_p\ldots,\theta_{p^{e-1}}$ on the quotient
$M_{f}^e/t M_{f}^e$. The following lemma shows that indeed,
$tM_f^e\subseteq M_f^e$, and the above operators have an induced
action on the quotient module.

\begin{lemma}\label{lem5_1}
For every $e\geq 1$ we have
$$t M_{f}^e=D_R^e[\theta_1,\theta_p,\ldots,\theta_{p^{e-1}}]\cdot f\delta\subseteq M_{f}^e.$$
\end{lemma}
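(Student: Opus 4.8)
The plan is to analyze the operator ``multiplication by $t$'' on the direct sum decomposition of $M_f^e$ from Proposition~\ref{prop5_1}. Recall that
$$M_f^e=\bigoplus_{i_1,\ldots,i_e=0}^{p-1}\left(D_R^e\bullet f^{i_1+i_2p+\cdots+i_ep^{e-1}}\right)Q_{i_1,\ldots,i_e}^0.$$
I would first establish the inclusion $tM_f^e\subseteq M_f^e$. For this, it suffices to show that $t\cdot Q_{i_1,\ldots,i_e}^0\in M_f^e$ for every tuple $(i_1,\ldots,i_e)$, since $t$ commutes (up to lower $\theta$ terms, by Lemma~\ref{lem5_2} i)) with the generating operators, and more directly because each component $R\cdot Q_{i_1,\ldots,i_e}^0$ generates a $D_R^e$-submodule isomorphic to $R$ (Theorem~\ref{thm4_1} iii)). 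Multiplication by $t=t^{p^0}$ is governed by Theorem~\ref{thm4_1} ii) with $\ell=1$: if some $i_\ell\neq 0$ then $t\cdot Q_{i_1,\ldots,i_e}^0$ is again some $Q_{j_1,\ldots,j_e}^0$ (with $m$ still $0$), and the generator $f^{i_1+\cdots+i_ep^{e-1}}$ of the corresponding summand, after applying $D_R^e$, lands inside the $D_R^e$-orbit attached to the new tuple because the exponent decreases appropriately; while if $i_1=\cdots=i_e=0$ we land in the last case of Theorem~\ref{thm4_1} ii), namely $f^{p^e}Q_{p-1,\ldots,p-1}^0-Q_{p-1,\ldots,p-1}^{-1}=f^{p^e}Q_{p-1,\ldots,p-1}^0$, and $f^{p^e}=f^{(p-1)+(p-1)p+\cdots+(p-1)p^{e-1}}\cdot f$, which lies in $D_R^e\bullet f^{(p-1)+\cdots+(p-1)p^{e-1}}$ up to the factor $f$ — here one uses that $f\cdot(D_R^e\bullet g)\subseteq D_R^e\bullet(fg)$? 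No: the cleaner route is to observe $Q^0_{0,\ldots,0}=\delta$ (up to sign and a unit binomial coefficient, by equation (\ref{eq5_3}) with $e$ replaced suitably, or directly since all $i_\ell=0$ forces the defining formula to give $\delta$), so $t\delta$ is visibly $f\delta$ by (\ref{eq4_1}) with $e=0$.

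This last observation is the crux: $t\cdot\delta=f\delta$ in $B_f$, directly from $t^{p^0}\delta_0=f^{p^0}\delta_0-\delta_{-1}=f\delta$ (equation (\ref{eq4_1})). Since $M_f^e$ is generated over $D_R^e[\theta_1,\ldots,\theta_{p^{e-1}}]$ by $\delta$, and since $t$ normalizes this ring — by Lemma~\ref{lem5_2} i), $[t,\theta_m]=-\theta_{m-1}t$, so $t\theta_m=\theta_mt-[\theta_m,t]=\theta_m t+\theta_{m-1}t=(\theta_m+\theta_{m-1})t$, and iterating, $t$ times any word in the $\theta_{p^j}$ equals (another word in the $\theta_m$, all of which lie in $D_R^e[\theta_1,\ldots,\theta_{p^{e-1}}]$ by Remark~\ref{cor5_1}) times $t$; moreover $t$ commutes with $D_R^e$ since $D_R^e\subseteq D_{R[t]}^e$ acts $R$-linearly? — one must be slightly careful: $D_R^e$ here consists of operators on $R$, extended to $R[t]$ by $R[t]$-... actually $D_R^e\subseteq\mathrm{End}_{R^{p^e}}(R)$ and these are extended $\mathbb F_p[t]$-linearly, hence genuinely commute with multiplication by $t$. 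Therefore
$$tM_f^e=t\,D_R^e[\theta_1,\ldots,\theta_{p^{e-1}}]\cdot\delta=D_R^e[\theta_1,\ldots,\theta_{p^{e-1}}]\cdot t\delta=D_R^e[\theta_1,\ldots,\theta_{p^{e-1}}]\cdot f\delta,$$
which is the claimed equality; the inclusion $\subseteq M_f^e$ then follows since $f\delta\in R\delta\subseteq M_f^e$ and $M_f^e$ is stable under its defining ring of operators.

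The main obstacle I anticipate is the verification that $t$ genuinely normalizes the operator ring $D_R^e[\theta_1,\theta_p,\ldots,\theta_{p^{e-1}}]$ on the nose — i.e.\ that $t\cdot P\in (D_R^e[\theta_1,\ldots,\theta_{p^{e-1}}])\cdot t$ for every $P$ in this ring. The commutation $[t,\theta_m]=-\theta_{m-1}t$ from Lemma~\ref{lem5_2} i) handles the $\theta$'s one at a time, but pushing $t$ past an arbitrary polynomial expression in $\theta_1,\ldots,\theta_{p^{e-1}}$ produces, at each step, terms $\theta_{m-1}$ with $m-1$ possibly not a power of $p$; Remark~\ref{cor5_1} is exactly what saves this, guaranteeing all such $\theta_{m-1}$ (for $m-1<p^e$) already lie in the ring. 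Commutation of $t$ with $D_R^e$ is the routine part. Once normalization is in hand, the identity $tM_f^e=D_R^e[\theta_1,\ldots,\theta_{p^{e-1}}]\cdot f\delta$ is immediate, and $f\delta\in M_f^e$ gives the inclusion, completing the proof.
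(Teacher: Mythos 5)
Your adopted approach (discard the $Q$-basis attempt, observe $t\delta=f\delta$, and try to commute $t$ past the ring $S:=D_R^e[\theta_1,\theta_p,\ldots,\theta_{p^{e-1}}]$) is exactly the paper's strategy, and the commutation of $t$ with $D_R^e$ is handled correctly. But there is a genuine gap: you establish only the inclusion $t\cdot P\in S\cdot t$ for $P\in S$ (i.e.\ $tS\subseteq St$), and then treat the chain $tM_f^e = t\,S\cdot\delta = S\cdot t\delta = S\cdot f\delta$ as a string of equalities. The middle step requires the \emph{set equality} $tS=St$, not just one inclusion. Your argument yields only $tM_f^e\subseteq S\cdot f\delta$. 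For the reverse containment $S\cdot f\delta\subseteq tM_f^e$, given $P\in S$ you must rewrite $P\cdot f\delta=(Pt)\cdot\delta$ and show $Pt\in t\,S$, i.e.\ you also need $St\subseteq tS$. This is not a formality: pushing $t$ to the left past $\theta_m$ using $[t,\theta_m]=-\theta_{m-1}t$ produces $\theta_m t = t\theta_m+\theta_{m-1}t$, which still has a stray $\theta_{m-1}t$ on the right, and one must iterate. The paper handles this by first deriving, by induction on $m$ from Lemma~\ref{lem5_2}~i), the identity $[t,\theta_m]=-t\sum_{j=0}^{m-1}\theta_j$, which gives $\theta_m t=t\sum_{j=0}^{m}\theta_j\in tS$ (using Remark~\ref{cor5_1} to see all $\theta_j$ with $j<p^e$ lie in $S$). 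You should supply this reverse direction.

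A minor point: you have a sign slip. From $[t,\theta_m]=-\theta_{m-1}t$ one gets $t\theta_m=\theta_m t-\theta_{m-1}t=(\theta_m-\theta_{m-1})t$, not $(\theta_m+\theta_{m-1})t$. This does not affect the structure of the forward inclusion (the point is only that $t\theta_m$ is some element of $S$ times $t$), but it should be corrected.
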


\begin{proof}
It is clear that we have
$D_R^e[\theta_1,\ldots,\theta_{p^{e-1}}]\cdot f\delta\subseteq
M_{f}^e$. Note also that $t\delta=f\delta$, hence it is enough
to prove the equality in $D_{R[t]}^e$
$$t\cdot D_R^e[\theta_1,\theta_p,\ldots,\theta_{p^{e-1}}]=
D_R^e[\theta_1,\theta_p\ldots,\theta_{p^{e-1}}]\cdot t.$$
Lemma~\ref{lem5_2} i) and Remark~\ref{cor5_1} give
$$[t,\theta_{p^i}]=-\theta_{p^i-1}\cdot t\in
D_R^e[\theta_1,\theta_p,\ldots,\theta^{p^{i-1}}]t$$ for every $i\leq
e$. Since $t$ commutes with the operators in $D_R^e$, we deduce by
induction on $i\leq e-1$ that
$$t\cdot D_R^e[\theta_1,\theta_p,\ldots,\theta_{p^i}]\subseteq
D_R^e[\theta_1,\theta_p,\ldots,\theta_{p^i}]\cdot t.$$ 
The reverse inclusion follows similarly, using the fact that for every $m$ we have 
$[t,\theta_m]=-t\cdot\sum_{j=0}^{m-1}\theta_j$
(recall that $\theta_0=1$). This assertion follows in turn from 
Lemma~\ref{lem5_2} i), by induction on $m$.
\end{proof}

\begin{corollary}\label{cor5_2}
For every positive integer $e$ we have a decomposition
$$M_f^e/t M_f^e=\bigoplus_{i_1,\ldots,i_e}W_{i_1,\ldots,i_e},$$ such that
for every $1\leq\ell\leq e$, the operator $\theta_{p^{\ell-1}}$ acts
on $W_{i_1,\ldots,i_e}$ by $-i_{\ell}$, and
$$W_{i_1,\ldots,i_e}\simeq (D_R^e\bullet
f^{i_1+i_2p+\cdots+i_ep^{e-1}})/(D_R^e\bullet
f^{1+i_1+i_2p+\cdots+i_ep^{e-1}})$$ $($the $i_1,\ldots,i_e$ vary
over $\{0,\ldots,p-1\}$$)$.
\end{corollary}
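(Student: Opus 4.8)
Corollary~\ref{cor5_2} is essentially a book-keeping corollary that assembles the pieces built in Proposition~\ref{prop5_1} (the explicit decomposition of $M_f^e$), Lemma~\ref{lem5_1} (that $tM_f^e = D_R^e[\theta_1,\ldots,\theta_{p^{e-1}}]\cdot f\delta$), and Theorem~\ref{thm4_1} (the eigenvector structure of $B_f$). The plan is to derive both the eigenspace decomposition and the identification of the summands $W_{i_1,\ldots,i_e}$ directly from these.

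First I would record that, since both $M_f^e$ and $tM_f^e$ are $D_R^e[\theta_1,\theta_p,\ldots,\theta_{p^{e-1}}]$-submodules of $B_f$ (the first by definition, the second by Lemma~\ref{lem5_1}), the quotient $M_f^e/tM_f^e$ carries an action of the pairwise-commuting operators $\theta_1,\theta_p,\ldots,\theta_{p^{e-1}}$, each satisfying $\prod_{j=0}^{p-1}(\theta_{p^{\ell-1}}+j)=0$ on it (this relation holds on all of $B_f$ by the computation in the proof of Proposition~\ref{decomp1}). Hence $M_f^e/tM_f^e$ decomposes as $\bigoplus_{i_1,\ldots,i_e} W_{i_1,\ldots,i_e}$, where $\theta_{p^{\ell-1}}$ acts by $-i_\ell$ on $W_{i_1,\ldots,i_e}$; this is just the specialization of Proposition~\ref{decomp1} to the submodule-quotient situation, or equivalently it follows from the fact that the decomposition of $B_f$ into common eigenspaces from Theorem~\ref{thm4_1} restricts to $M_f^e$ and to $tM_f^e$.

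Next I would identify $W_{i_1,\ldots,i_e}$. By Proposition~\ref{prop5_1} the $(i_1,\ldots,i_e)$-component of $M_f^e$ is exactly $(D_R^e\bullet f^{i_1+i_2p+\cdots+i_ep^{e-1}})\cdot Q^0_{i_1,\ldots,i_e}$, and by Theorem~\ref{thm4_1}(iii) the map $R\to R\cdot Q^0_{i_1,\ldots,i_e}$ sending $1$ to $Q^0_{i_1,\ldots,i_e}$ is an isomorphism of $D_R^e$-modules, so this component is carried isomorphically onto the ideal $D_R^e\bullet f^{i_1+i_2p+\cdots+i_ep^{e-1}}\subseteq R$. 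To find the corresponding component of $tM_f^e$, I would apply the same reasoning to $tM_f^e = D_R^e[\theta_1,\ldots,\theta_{p^{e-1}}]\cdot f\delta$: running the argument of Proposition~\ref{prop5_1} with $f\delta$ in place of $\delta$ — writing $f\delta = f\cdot\sum(\pm)\binom{p^e-1}{\cdots}f^{i_1+\cdots}Q^0_{i_1,\ldots,i_e}$ using the expansion \eqref{eq5_3}, and noting again that the binomial coefficients are units in $\FF_p$ by Lucas — shows that the $(i_1,\ldots,i_e)$-component of $tM_f^e$ is $(D_R^e\bullet f^{1+i_1+i_2p+\cdots+i_ep^{e-1}})\cdot Q^0_{i_1,\ldots,i_e}$, which under the same isomorphism $R\cdot Q^0_{i_1,\ldots,i_e}\simeq R$ corresponds to the ideal $D_R^e\bullet f^{1+i_1+\cdots+i_ep^{e-1}}$. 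Taking the quotient component by component then gives $W_{i_1,\ldots,i_e}\simeq (D_R^e\bullet f^{i_1+\cdots+i_ep^{e-1}})/(D_R^e\bullet f^{1+i_1+\cdots+i_ep^{e-1}})$, as claimed.

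The only step that requires a moment of care — and the one I'd flag as the main obstacle — is justifying that the $(i_1,\ldots,i_e)$-component of $tM_f^e$ really is $(D_R^e\bullet f^{1+i_1+\cdots})\,Q^0_{i_1,\ldots,i_e}$ and not merely contained in it; equivalently, that $D_R^e[\theta_1,\ldots,\theta_{p^{e-1}}]\cdot f\delta$ is generated over $D_R^e[\theta_1,\ldots,\theta_{p^{e-1}}]$, after passing to components, by the single eigenvector $f\delta$ rather than involving cross-terms. This comes down to the same observation used for $\delta$ in the proof of Proposition~\ref{prop5_1}: $f\delta$ expands as a sum of terms $c_{i_1,\ldots,i_e}Q^0_{i_1,\ldots,i_e}$ with each $c_{i_1,\ldots,i_e}$ equal to $f^{1+i_1+\cdots+i_ep^{e-1}}$ times a unit of $\FF_p$, so projecting $f\delta$ onto the $(i_1,\ldots,i_e)$-eigenspace and then acting by $D_R^e$ (which preserves each $R\cdot Q^0_{i_1,\ldots,i_e}$ by Theorem~\ref{thm4_1}(iii)) sweeps out exactly $(D_R^e\bullet f^{1+i_1+\cdots})Q^0_{i_1,\ldots,i_e}$. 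I would spell this out in one or two sentences and leave the purely formal verification that quotient commutes with the direct-sum decomposition to the reader.
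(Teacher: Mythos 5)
Your proof is correct. The route is mildly different from the paper's, which simply cites Proposition~\ref{prop5_1} together with Theorem~\ref{thm4_1}~ii) and iii): the intended argument applies the explicit formula for $t\cdot Q^0_{i_1,\ldots,i_e}$ from Theorem~\ref{thm4_1}~ii) (with $\ell=1$) to the decomposition of $M_f^e$ from Proposition~\ref{prop5_1}, observes that each summand $(D_R^e\bullet f^{m})Q^0_{i_1,\ldots,i_e}$ is sent to $(D_R^e\bullet f^{m})Q^0_{j_1,\ldots,j_e}$ with $j_1+j_2p+\cdots+j_ep^{e-1}=m-1$ (or to $(D_R^e\bullet f^{p^e})Q^0_{p-1,\ldots,p-1}$ when $m=0$), reads off the decomposition of $tM_f^e$, and takes componentwise quotients via Theorem~\ref{thm4_1}~iii). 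You instead invoke Lemma~\ref{lem5_1} to present $tM_f^e$ as the cyclic $D_R^e[\theta_1,\ldots,\theta_{p^{e-1}}]$-module generated by $f\delta$ and then re-run the Proposition~\ref{prop5_1} argument verbatim with $f\delta$ in place of $\delta$, using that $f\delta=\sum_j c_j f^{1+n(j)}Q^0_j$ with the $c_j$ units by Lucas. Both are short and valid; the paper's version is a direct one-line computation with the action of $t$, while yours is slightly longer but has the advantage of being word-for-word parallel to the proof of Proposition~\ref{prop5_1}, which makes the structure transparent. Your final paragraph correctly isolates the only nontrivial point (that the $(i_1,\ldots,i_e)$-component of $tM_f^e$ is exactly, not merely contained in, $(D_R^e\bullet f^{1+m})Q^0_{i_1,\ldots,i_e}$) and resolves it by projecting $f\delta$ with the Lagrange idempotents in $\FF_p[\theta_1,\ldots,\theta_{p^{e-1}}]$ and acting by $D_R^e$, which is exactly right.
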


\begin{proof}
The assertion follows from Proposition~\ref{prop5_1} and
Theorem~\ref{thm4_1} ii) and iii).
\end{proof}

\begin{notation}
Let $\Gamma^e_f\subseteq\{0,1,\ldots,p-1\}^e$ be the set of those
$(i_1,\ldots,i_e)$ such that $W_{i_1,\ldots,i_e}\neq\emptyset$. In
other words, $(i_1,\ldots,i_e)\in\Gamma_f^e$ if and only if there is
a nonzero element $u\in M_f^e/t M_f^e$ such that
$(\theta_{p^{\ell-1}}+i_{\ell})u=0$ for $1\leq\ell\leq e$.
\end{notation}

By analogy with the characteristic zero case, we define the
Bernstein-Sato polynomial of $f$ to be the minimal polynomial of
$-\theta_1$ on the $D_R^1$-module $M_f^1/t M_f^1$. In other words,
we have
$$b_f(s)=\prod_{i\in\Gamma^1_f}(s-i)\in \FF_p[s].$$
Note that unlike in characteristic zero, this polynomial always has
distinct roots.

In order to also keep track of the action of the higher
$\theta_{p^e}$, we introduce the \emph{higher Bernstein-Sato
polynomials} $b_f^{(e)}(s)\in\QQ[s]$, defined by
$$b_f^{(e)}(s)=\prod_{(i_1,\ldots,i_e)\in\Gamma^e_f}\left(s-\left(\frac{i_e}{p}+\cdots+
\frac{i_1}{p^e}\right)\right).$$ Note that $b_f\in \FF_p[s]$ while
$b_f^{(1)}\in\QQ[s]$, but they contain the same amount of information. It
follows from definition that $b_f^{(e)}$ has distinct roots, all of
them in $\frac{1}{p^e}\ZZ\cap [0,1)$. Our next goal is to relate the
roots of $b_f^{(e)}$ to the $F$-jumping exponents of $f$.

\begin{theorem}\label{thm_main2}
For every $e\geq 1$, the roots of the Bernstein-Sato polynomial
$b_f^{(e)}(s)$ are simple, and they are given by the rational
numbers $\frac{\lceil p^e\lambda\rceil-1}{p^e}$, where $\lambda$
varies over the $F$-jumping exponents of $f$ in $(0,1]$.
\end{theorem}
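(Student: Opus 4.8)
The plan is to reduce the statement to an explicit description of the set $\Gamma_f^e$ in terms of the ideals $D_R^e\bullet f^j$, using Corollary~\ref{cor5_2}. By that corollary, $(i_1,\ldots,i_e)\in\Gamma_f^e$ if and only if the inclusion $D_R^e\bullet f^{1+j}\subseteq D_R^e\bullet f^{j}$ is strict, where $j=i_1+i_2p+\cdots+i_ep^{e-1}$; moreover $j$ ranges over $\{0,1,\ldots,p^e-1\}$ as $(i_1,\ldots,i_e)$ ranges over $\{0,\ldots,p-1\}^e$, and the map sending $(i_1,\ldots,i_e)$ to the rational number $\frac{i_e}{p}+\cdots+\frac{i_1}{p^e}$ is exactly $j\mapsto j/p^e$. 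So the theorem is equivalent to the statement: for $0\le j\le p^e-1$, one has $D_R^e\bullet f^{j+1}\subsetneq D_R^e\bullet f^{j}$ if and only if $\frac{j}{p^e}=\frac{\lceil p^e\lambda\rceil-1}{p^e}$ for some $F$-jumping exponent $\lambda\in(0,1]$, and in that case the roots are automatically simple because distinct $(i_1,\ldots,i_e)$ give distinct $j$ give distinct rational numbers $j/p^e$.

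First I would translate the ideals $D_R^e\bullet f^j$ back into test-ideal language. By Remark~\ref{rem5_1}, $D_R^e\bullet f^j=\big((f^j)^{[1/p^e]}\big)^{[p^e]}$. Since the Frobenius power operation $J\mapsto J^{[p^e]}$ is injective on ideals (as $R$ is regular, hence free over $R^{p^e}$), the chain $D_R^e\bullet f^{j+1}\subseteq D_R^e\bullet f^j$ is strict exactly when $(f^{j+1})^{[1/p^e]}\subsetneq (f^j)^{[1/p^e]}$. Now I invoke the material from \S 3: the function $j\mapsto (f^j)^{[1/p^e]}$ on $\{0,1,\ldots,p^e\}$ is a decreasing chain of ideals whose ``jumps'' are governed by the test ideals $\tau(f^{j/p^e})$. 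More precisely, using the $F$-threshold description in Remark~\ref{interpretation_thresholds} — namely $\nu^J(p^e)=\lceil c^J(f)p^e\rceil-1$ for $J\ne R$, together with the fact that the $F$-jumping exponents are exactly the $F$-thresholds $c^J(f)$ — one gets that $(f^j)^{[1/p^e]}\ne (f^{j+1})^{[1/p^e]}$ precisely when $j=\lceil p^e\lambda\rceil-1$ for some $F$-jumping exponent $\lambda$; and for $\lambda\in(0,1]$ this $j$ lies in $\{0,\ldots,p^e-1\}$, while $\lambda=0$ contributes nothing here since $\tau(f^0)=R=\tau(f^{1/p^e})$ unless $1/p^e$ is itself a jumping exponent. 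I would also record periodicity: since $\tau(f^{\lambda+1})=f\cdot\tau(f^\lambda)$, restricting to $(0,1]$ loses no information, and the $\lceil p^e\lambda\rceil-1$ with $\lambda\in(0,1]$ are exactly the relevant residues mod $p^e$.

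The step I expect to be the main obstacle is the precise bookkeeping connecting ``$(f^j)^{[1/p^e]}$ strictly contains $(f^{j+1})^{[1/p^e]}$'' with ``$\tau(f^{j/p^e})\ne\tau(f^{(j+1)/p^e})$'' and then with the $F$-jumping exponents — in particular being careful that we use the identity $\tau(f^{m/p^e})=(f^m)^{[1/p^e]}$ (Lemma~2.1 of \cite{BMSm1}, quoted in \S 3) only for $e$ large, and then passing back down to arbitrary $e$. The clean way around this is to fix $e$ and argue directly at level $e$: show that for $0\le j\le p^e-1$, the ideal $(f^j)^{[1/p^e]}$ stabilizes to $\tau(f^{j/p^e})$ only after possibly further enlarging $e$, but that the \emph{location} of the jumps among $j=0,\ldots,p^e-1$ is already visible at level $e$ because the chain $(f^0)^{[1/p^e]}\supseteq(f^1)^{[1/p^e]}\supseteq\cdots\supseteq(f^{p^e})^{[1/p^e]}=f\cdot(f^0)^{[1/p^e]}$ has a jump at step $j\to j+1$ if and only if $j/p^e$ or some point of $(j/p^e,(j+1)/p^e]$ —equivalently $(j+1)/p^e$— is an $F$-jumping exponent. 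I would make this last equivalence rigorous using the $F$-threshold characterization and the fact (Remark~\ref{interpretation_thresholds}) that $\nu^J(p^e)=\lceil c^J(f)p^e\rceil-1$, which pins the jump of $j\mapsto\tau(f^{j/p^e})$ at exactly $j=\lceil p^e c^J(f)\rceil-1$. Finally, simplicity of the roots of $b_f^{(e)}$ is immediate once the jumps are indexed by distinct integers $j\in\{0,\ldots,p^e-1\}$, since the map $j\mapsto j/p^e$ is injective.
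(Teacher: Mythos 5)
Your proof is correct and follows essentially the same route as the paper: characterize $\Gamma_f^e$ via Corollary~\ref{cor5_2}, translate $D_R^e\bullet f^j$ into $\tau(f^{j/p^e})^{[p^e]}$, and use flatness of Frobenius together with the $F$-threshold identity $\nu^J(p^e)=\lceil c^J(f)p^e\rceil-1$ to place the jumps at $j=\lceil p^e\lambda\rceil-1$. The one point where you over-complicated matters: the identity $\tau(f^{m/p^e})=(f^m)^{[1/p^e]}$ from Lemma~2.1 of \cite{BMSm1} is an \emph{exact} equality for every $e$ (not just a stable limit for $e\gg 0$), so the ``obstacle'' you flagged and the workaround you devised around stabilization are unnecessary --- once you know that identity holds on the nose, the argument closes immediately.
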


Before giving the proof of the theorem, we introduce some notation.
Given $\lambda\in (0,1]$, we can write it uniquely as
\begin{equation}
\lambda=\sum_{i\geq 1}\frac{c_i(\lambda)}{p^i},
\end{equation}
with all $c_i(\lambda)\in\{0,1,\ldots,p-1\}$, and such that
infinitely many of the $c_i(\lambda)$ are nonzero. Note that the
$c_i(\lambda)$ are determined recursively by $c_1(\lambda)=\lceil
\lambda p\rceil-1$ and $c_i(\lambda)=
c_{i-1}(p\lambda-c_1(\lambda))$ for $i\geq 2$. Moreover, for every
$e$ we have
\begin{equation}\label{eq_thm_main2}
\frac{c_1(\lambda)}{p}+\cdots+\frac{c_e(\lambda)}{p^e}=\frac{\lceil
\lambda p^e\rceil -1}{p^e}.
\end{equation}

\begin{lemma}\label{lem_thm_main2}
For every positive integer $e$,
$$\Gamma_f^e=\{(c_e(\lambda),\ldots,c_1(\lambda))\mid
\lambda\in(0,1]\,\text{is an F-jumping exponent for}\,f\}.$$
\end{lemma}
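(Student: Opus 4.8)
\textbf{Proof plan for Lemma~\ref{lem_thm_main2}.}
The plan is to identify, for each $(i_1,\dots,i_e)$, exactly when the component $W_{i_1,\dots,i_e}$ of $M_f^e/tM_f^e$ is nonzero, using the explicit description in Corollary~\ref{cor5_2}:
$$W_{i_1,\dots,i_e}\simeq (D_R^e\bullet f^{N})/(D_R^e\bullet f^{N+1}),\qquad N=i_1+i_2p+\cdots+i_ep^{e-1}.$$
So the task reduces to: $(i_1,\dots,i_e)\in\Gamma_f^e$ if and only if $D_R^e\bullet f^{N}\neq D_R^e\bullet f^{N+1}$, i.e. $f^{N}\notin D_R^e\bullet f^{N+1}$. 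By Remark~\ref{rem5_1}, $D_R^e\bullet g=(g^{[1/p^e]})^{[p^e]}$, so this inequality says $(f^{N})^{[1/p^e]}\neq (f^{N+1})^{[1/p^e]}$, equivalently (since $R$ is free over $R^{p^e}$ and Frobenius powers reflect containments) $(f^N)^{[1/p^e]}\not\subseteq (f^{N+1})^{[1/p^e]}$.

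First I would translate this into the language of test ideals. Since $\tau(f^{m/p^e})=(f^m)^{[1/p^e]}$ for every nonnegative integer $m$ (recalled in \S 3), the condition $f^N\notin D_R^e\bullet f^{N+1}$ becomes $\tau(f^{N/p^e})\neq\tau(f^{(N+1)/p^e})$. Writing $N=i_1+i_2p+\cdots+i_ep^{e-1}$, note that $N$ ranges over $\{0,1,\dots,p^e-1\}$ as $(i_1,\dots,i_e)$ ranges over $\{0,\dots,p-1\}^e$, and $N/p^e\in[0,1)$. Thus $(i_1,\dots,i_e)\in\Gamma_f^e$ precisely when $\tau(f^{N/p^e})\neq\tau(f^{(N+1)/p^e})$. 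Next I would invoke discreteness of $F$-jumping exponents together with the fact that $\tau(f^\lambda)$ is right-continuous (constant on $[\lambda,\lambda+\epsilon)$): the value $\tau(f^{N/p^e})$ differs from $\tau(f^{(N+1)/p^e})$ if and only if there is an $F$-jumping exponent $\lambda$ in the half-open interval $(N/p^e,(N+1)/p^e]$. Because $F$-jumping exponents are dense-free in the sense that distinct ones cannot lie in an interval of length $1/p^e$ — more precisely, here I would just use that such a $\lambda$, if it exists in $(N/p^e,(N+1)/p^e]$, is \emph{unique} because consecutive intervals of this form partition $(0,1]$ — we get a well-defined correspondence.

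The final step is to match the indices with the $p$-adic digits $c_i(\lambda)$. If $\lambda\in(0,1]$ is an $F$-jumping exponent lying in $(N/p^e,(N+1)/p^e]$, then by definition of $\lceil\cdot\rceil$ we have $\lceil p^e\lambda\rceil-1=N$, so by \eqref{eq_thm_main2} combined with the digit expansion, $N=c_1(\lambda)+c_2(\lambda)p+\cdots+c_e(\lambda)p^{e-1}$; comparing with $N=i_1+i_2p+\cdots+i_ep^{e-1}$ and uniqueness of base-$p$ representation gives $i_\ell=c_\ell(\lambda)$ for all $\ell$, i.e. $(i_1,\dots,i_e)=(c_1(\lambda),\dots,c_e(\lambda))$. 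Reversing this, given an $F$-jumping exponent $\lambda\in(0,1]$, setting $N=\lceil p^e\lambda\rceil-1$ places $\lambda$ in $(N/p^e,(N+1)/p^e]$ and forces $\tau(f^{N/p^e})\neq\tau(f^{(N+1)/p^e})$ (there being no jumping exponent strictly below $\lambda$ in this interval), so $(c_1(\lambda),\dots,c_e(\lambda))\in\Gamma_f^e$; but the statement of the lemma lists the tuple in the order $(c_e(\lambda),\dots,c_1(\lambda))$ to match the convention in the definition of $W_{i_1,\dots,i_e}$ versus $\Gamma_f^e$, so I would double-check the ordering convention (the index $i_\ell$ is attached to $\theta_{p^{\ell-1}}$, and $N=\sum i_\ell p^{\ell-1}$) and present the bijection in that order. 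The main obstacle I anticipate is not conceptual but bookkeeping: keeping the roles of $N$ and its digits consistent with the two different indexing conventions (ascending digits in $N=\sum i_\ell p^{\ell-1}$ versus the tuple ordering in $\Gamma_f^e$ and in the lemma statement), and carefully justifying that the "jump" in test ideals across $(N/p^e,(N+1)/p^e]$ is detected exactly by a jumping exponent in that interval — which is where right-continuity and discreteness of $\tau(f^\bullet)$ must be cited precisely.
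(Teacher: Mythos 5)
Your proof plan follows exactly the paper's argument: reduce via Corollary~\ref{cor5_2} and Remark~\ref{rem5_1} to comparing $\tau(f^{N/p^e})$ with $\tau(f^{(N+1)/p^e})$, where $N=i_1+i_2p+\cdots+i_ep^{e-1}$; use flatness of Frobenius to pass between Frobenius powers and the ideals themselves; detect the jump by an $F$-jumping exponent in $(N/p^e,(N+1)/p^e]$; and then match digits. This is the same route the paper takes.

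The one place that needs correcting is exactly the bookkeeping you flagged at the end. Your line ``$N=c_1(\lambda)+c_2(\lambda)p+\cdots+c_e(\lambda)p^{e-1}$'' is not what (\ref{eq_thm_main2}) gives. Multiplying that identity by $p^e$ yields
$$N=\lceil p^e\lambda\rceil-1=c_1(\lambda)p^{e-1}+c_2(\lambda)p^{e-2}+\cdots+c_e(\lambda),$$
so $c_\ell(\lambda)$ is the coefficient of $p^{e-\ell}$ in $N$, not of $p^{\ell-1}$. Comparing with $N=i_1+i_2p+\cdots+i_ep^{e-1}$ and using uniqueness of base-$p$ expansion gives $i_\ell=c_{e-\ell+1}(\lambda)$, that is, $(i_1,\ldots,i_e)=(c_e(\lambda),\ldots,c_1(\lambda))$ --- which matches the lemma as stated, and there is no residual ordering convention to reconcile.

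A smaller point: the claim that an $F$-jumping exponent in $(N/p^e,(N+1)/p^e]$ is necessarily unique is false in general (for small $e$ several jumping exponents can lie in one interval of length $1/p^e$), and the observation that the intervals partition $(0,1]$ does not give it. Fortunately uniqueness is not needed: the lemma asserts only a set equality, and distinct jumping exponents in the same interval simply produce the same digit tuple.
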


\begin{proof}
It follows from Corollary~\ref{cor5_2} that
$(i_1,\ldots,i_e)\in\{0,1,\ldots,p-1\}^{e}$ lies in $\Gamma_f^e$ if
and only if $D_R^e\bullet f^{i_1+i_2p+\cdots+i_ep^{e-1}}\neq
D_R^e\bullet f^{1+i_1+i_2p+\cdots+i_ep^{e-1}}$. On the other hand,
for every nonnegative integer $m$ we have
$$D_R^e\bullet
f^m=\left((f^m)^{[1/p^e]}\right)^{[p^e]}=\tau(f^{m/p^e})^{[p^e]}$$
(for the second equality see, for example, Lemma~2.1 in
\cite{BMSm1}).

 Since the Frobenius morphism on $R$ is flat, for
every two ideals $I_1$ and $I_2$ in $R$ we have
$I_1^{[p^e]}\subseteq I_2^{[p^e]}$ if and only if $I_1\subseteq
I_2$. Therefore $(i_1,\ldots,i_e)\in\Gamma_f^e$ if and only if there
is an $F$-jumping exponent $\lambda$ of $f$ in the interval
$\left(\frac{m}{p^e},\frac{m+1}{p^e}\right]$, where
$m=i_1+i_2p+\cdots+i_ep^{e-1}$. On the other hand, it follows from
the definition of the $c_j(\lambda)$ that this is the case if and
only if
$m=c_1(\lambda)p^{e-1}+\cdots+c_{e-1}(\lambda)p+c_e(\lambda)$. Using
the fact that $i_{\ell}, c_j(\lambda)\in\{0,1,\ldots,p-1\}$, it
follows that
 this is further equivalent with
 $(i_1,\ldots,i_e)=(c_e(\lambda),\ldots,c_1(\lambda))$,
 which completes the proof of the lemma.
 \end{proof}

\begin{proof}[Proof of Theorem\ref{thm_main2}]
The fact that the roots of $b_f^{(e)}$ are simple is a consequence
of the definition. Lemma~\ref{lem_thm_main2} implies that these
roots correspond to the rational numbers of the form
$\frac{c_1(\lambda)}{p}+\cdots+\frac{c_e(\lambda)}{p^e}$, where
$\lambda$ varies over the $F$-jumping exponents of $f$.
Formula~(\ref{eq_thm_main2}) implies the statement of the theorem.
\end{proof}

\bigskip

\begin{remark}\label{finiteness}
It follows from Theorem~1.1 in \cite{BMSm1} that there are finitely
many (say $r$) $F$-jumping exponents of $f$ in $(0,1]$.
Theorem~\ref{thm_main2} implies that the number of roots of
$b_f^{(e)}$ is bounded above by $r$ for every $e$, with equality for
$e\gg 0$.
\end{remark}

\begin{remark}\label{F_thresholds}
We can use the interpretation of the $F$-jumping exponents as
$F$-thresholds (see Remark~\ref{interpretation_thresholds}) to
reinterpret Theorem~\ref{thm_main2} as follows. Let $J$ be a proper
ideal of $R$ containing $f$ (this is equivalent with $c^J(f)\leq
1$). For a given $e\geq 1$, the ratio $\frac{\nu^{J}(p^e)}{p^e}$ is
a root of $b_f^{(e)}$, and all roots of $b_f^{(e)}$ are of this form
(for some ideal $J$).
\end{remark}

\begin{example}\label{case_1}
If $f$ is not invertible, then $1$ is an $F$-jumping exponent for
$f$: use Remark~\ref{interpretation_thresholds} and the fact that
$c^{(f)}(f)=1$. Since $c_i(1)=p-1$ for every $i$, we see that
$(p-1,\ldots,p-1)\in\Gamma_e$ for every $e\geq 1$. Therefore
$1-\frac{1}{p^e}$ is always a root of $b_f^{(e)}$.
\end{example}

\begin{remark}\label{rem5_5}
It follows from Lemma~\ref{lem_thm_main2} that we have a surjective
map $\Gamma_f^{e+1}\to\Gamma_f^e$ that takes $(i_1,\ldots,i_{e+1})$
to $(i_2,\ldots,i_{e+1})$. Note that we have another map
$\Gamma_f^{e+1}\to\Gamma_f^e$, taking $(i_1,\ldots,i_{e+1})$ to
$(i_1,\ldots,i_e)$. Indeed, by the same lemma, it is enough to show
that for every $F$-jumping coefficient $\lambda\in (0,1]$ for $f$,
we have $(c_{e+1}(\lambda),\ldots,c_2(\lambda))\in\Gamma^e_f$.

It it is known that if $\lambda$ is an $F$-jumping exponent of $f$,
then the fractional part $\{p\lambda\}$ of $p\lambda$ is also an
$F$-jumping exponent (see Proposition~3.4 in \cite{BMSm2}). If
$p\lambda$ is not an integer, then
$c_i(\{p\lambda\})=c_{i+1}(\lambda)$ for $i\geq 1$, hence
$(c_{e+1}(\lambda),\ldots,c_2(\lambda))\in\Gamma^e_f$. On the other
hand, if $p\lambda=m\in\ZZ$, then $c_1(\lambda)=m-1$, and
$c_i(\lambda)=p-1$ for $i\geq 2$. In this case, we get
$(c_{e+1}(\lambda),\ldots,c_2(\lambda))\in\Gamma_f^e$ by
Example~\ref{case_1}.
\end{remark}

\begin{remark}\label{comparison}
Note that we have canonical maps $\phi_e\colon M_f^e/t M_f^e\to
M_f^{e+1}/t M_f^{e+1}$. If we denote by $Q'_{i_1,\ldots,i_e}$ the
class of $f^{i_1+i_2p+\cdots i_ep^{e-1}}Q_{i_1,\ldots,i_e}^0$ in
$M_f^e/t M_f^e$, then it follows from Remark~\ref{transformation}
that
$$\phi_e(Q'_{i_1,\ldots,i_e})=\sum_{j=0}^{p-1}(-1)^j{{p-1}\choose {j}}
Q'_{i_1,\ldots,i_e,j}.$$ We will see in Example~\ref{cusp3} below
that it can happen that no map $\phi_e$ is injective, and that
we miss a lot of information if instead of considering all $M_f^e$
we consider only $M_f$.
\end{remark}

\begin{example}\label{cusp0}
Consider the case of the cusp $f_p=x^2+y^3\in \FF_p[x,y]$, with
$p>3$. We have seen in Example~\ref{cusp} that the only jumping
numbers of $f_p$ in $(0,1]$ are $c_p$ and $1$, where
$c_p=\frac{5}{6}$ if $p\equiv 1$ (mod $3$), and
$c_p=\frac{5}{6}-\frac{1}{6p}$ if $p\equiv 2$ (mod $3$). Note that we have
$1=\sum_{e\geq 1}(p-1)\cdot\frac{1}{p^e}$ and
\[
c_p= \left\{
\begin{array}{cl}
\sum_{i\geq 1}\frac{5(p-1)}{6}\cdot\frac{1}{p^i}, & \text{if}\,\,p\equiv 1\,({\rm mod}\,3); \\[2mm]
\frac{5p-7}{6}\cdot\frac{1}{p}+\sum_{i\geq
2}(p-1)\cdot\frac{1}{p^i}, & \text{if}\,\,p\equiv 2\,({\rm mod}\,3).
\end{array}\right.
\]
It follows from Lemma~\ref{lem_thm_main2} that if $p\equiv 1$ (mod
$3$), then
$$\Gamma_{f_p}^e=\left\{(p-1,\ldots,p-1),\left(\frac{5(p-1)}{6},\ldots,\frac{5(p-1)}{6}\right)\right\}$$
for every $e\geq 1$, and if $p\equiv 2$ (mod $3$), then
$$\Gamma_{f_p}^1=\left\{p-1,\frac{5p-7}{6}\right\},
\,\,\Gamma_{f_p}^e=\left\{(p-1,\ldots,p-1),\left(p-1,\ldots,p-1,\frac{5p-7}{6}\right)\right\}
\,{\rm for}\,e\geq 2.$$ We deduce the formula for the Bernstein-Sato
polynomial
\[ b_f^{(e)}(s)= \left\{
\begin{array}{cl}
\left(s-\left(1-\frac{1}{p^e}\right)\right)\left(s-\frac{5}{6}\left(1-\frac{1}{p^e}\right)\right), & \text{if}\,\,p\equiv 1\,({\rm mod}\,3); \\[2mm]
\left(s-\left(1-\frac{1}{p^e}\right)\right)\left(s-\left(\frac{5p-1}{6p}-\frac{1}{p^e}\right)\right),
& \text{if}\,\,p\equiv 2\,({\rm mod}\,3).
\end{array}\right.
\]
In particular, we see that in $\FF_p[s]$ we have
\[
b_f(s)= \left\{
\begin{array}{cl}
\left(s-(p-1)\right)\left(s-\frac{5}{6}(p-1)\right)=(s+1)\left(s+\frac{5}{6}\right), & \text{if}\,\,p\equiv 1\,({\rm mod}\,3); \\[2mm]
\left(s-(p-1)\right)\left(s-\frac{5p-7}{6}\right)=(s+1)\left(s+\frac{7}{6}\right),
& \text{if}\,\,p\equiv 2\,({\rm mod}\,3).
\end{array}\right.
\]
\end{example}

\begin{example}\label{cusp3}
Let $f_p=x^2+y^3\in\FF_p[x,y]$, where $p>3$ is a prime with $p\equiv
2$ (mod $3$). Using the notation in Remark~\ref{comparison}, the
computation in the previous example shows that for every $e\geq 2$
we have
$$M_{f_p}^e/t M_{f_p}^e=D_R^e \cdot Q'_{p-1,\ldots,p-1,p-1}\oplus D_R^e\cdot
Q'_{p-1,\ldots,p-1,\frac{5p-7}{6}},$$ and both components are
nonzero. We have
$$h_e(Q'_{p-1,\ldots,p-1})=Q'_{p-1,\ldots,p-1,p-1}+(-1)^{(5p-7)/6}{{(p-1)}\choose{(5p-7)/6}}Q'_{p-1,\ldots,p-1,\frac{5p-7}{6}}$$
and $h_e(Q'_{p-1,\ldots,p-1,\frac{5p-7}{6}})=0$. In particular, the images
of all $Q'_{p-1,\ldots,p-1}$ in $M_{f_p}/t M_{f_p}$ coincide,
and this element generates $M_{f_p}/t M_{f_p}$ over $D_R$. 
We deduce that all operators $\theta_{p^e}$ (for $e\geq 0$) are equal
to the identity on $M_{f_p}/t M_{f_p}$.
\end{example}

\begin{example}\label{quadric}
Let $f=x_1^2+\cdots+x_n^2\in \FF_p[x_1,\ldots,x_n]$, where $p>2$ and
$n\geq 2$. It follows from Example 4.1 in \cite{MTW} that the only
$F$-jumping exponent of $f$ in $(0,1]$ is $1$. Therefore
$b_f^{(e)}=\left(s-\left(1-\frac{1}{p^e}\right)\right)$ for every
$e\geq 1$. In particular, we have $b_f(s)=(s+1)$. Note however that if
$\widetilde{f}=x_1^2+\cdots+x_n^2\in\QQ[x_1,\ldots,x_n]$, then 
$b_{\widetilde{f}}(s)=(s+1)\left(s+\frac{n}{2}\right)$, see \cite{Kashiwara3}, Example~6.19.
\end{example}

\begin{example}\label{quasihomogeneous}
Let $f\in R=k[x_1,\ldots,x_n]$, with $k$ an $F$-finite field of characteristic $p>0$, and suppose that there are integers
$d$ and $w_1,\ldots,w_n$ such that for every monomial
$x^u=x_1^{u_1}\cdots x_n^{u_n}$ with nonzero coefficient in $f$, we
have $\sum_iu_iw_i\equiv d$ (mod $p$). We assume that $d\not\equiv
0$ (mod $p$), hence we can write $f=\frac{1}{d}\cdot \sum_{i=1}^n
w_ix_i\frac{\partial f}{\partial x_i}$. Therefore $f$ has isolated
singularities if and only if
 $\dim_k(R/J_f)<\infty$, where $J_f=(\partial f/\partial
x_1,\ldots,\partial f/\partial x_n)$. If this is the case, then for
every root $\beta\neq -1$ of $b_f$ there is a monomial $x^u\not\in
J_f$ such that $\beta=-\frac{\sum_iw_i(u_i+1)}{d}$.

The argument is similar to the corresponding one in characteristic
zero (see \S 6.4 in \cite{Kashiwara3}). It is clear that we have an isomorphism
$$M_f/t M_f\simeq D_R^1[\partial_tt]/J,$$
where $J=\{P\in D_R^1[\partial_tt] \mid P\cdot \delta\in t M_f\}$. If we put
$T_f:=(1-\partial_tt)\cdot M_f/tM_f$, then $b_f(s)/(s+1)$ is the
minimal polynomial of $-\partial_tt$ on $T_f$. Moreover, we have
$T_f\simeq D_R^1[\partial_tt]/J'$, where $J'=\{Q\in D_R[\partial_tt] \mid
(1-\partial_tt)Q\in J\}$.

 Let $\xi=\sum_iw_i
x_i\partial_i$, where we put $\partial_i:=
\partial_{x_i}$.
It follows by direct computation that
$(\xi+d\partial_tt)\cdot\delta=0$, hence
$\partial_tt+\frac{1}{d}\xi\in J$. Moreover, since $f\in J$ and
$$\left(\partial_i f+\frac{\partial f}{\partial
x_i}(\partial_tt-1)\right)\cdot\delta=0,$$ we conclude that
$\partial f/\partial x_i\in J'$ for every $i$. Hence we have a
surjection of $k$-vector spaces
$$D_R^1[\partial_tt]/D_R^1(\partial_tt+\frac{1}{d}\xi,\partial f/\partial x_1,\ldots,\partial f/\partial
x_n)\simeq
k[\partial_1,\ldots,\partial_n]/(\partial_1^p,\ldots,\partial_n^p)\otimes_k
R/J_f\to T_f.$$ In order to describe the action of $\-\partial_tt$
on the left-hand side, note first that this commutes with the
operators $\partial_i$. Furthermore, we have in this quotient module
$$(-\partial_tt)\cdot x^u=\frac{1}{d}x^u\xi=\frac{1}{d}\left(\sum_iw_i
\partial_ix_i\right)x^u-\frac{1}{d}\sum_i(u_i+1)w_ix^u,$$ and
therefore
$$(-\partial_tt)\cdot (1\otimes x^u)+\frac{\sum_i(u_i+1)w_i}{d}(1\otimes x^u)\in \sum_j\partial_j\cdot
k[\partial_1,\ldots,\partial_n]/(\partial_1^p,\ldots,\partial_n^p)\otimes_k
R/J_f.$$ 
It follows that if we consider on $k[\partial_1,\ldots,\partial_n]/(\partial_1^p,\ldots,\partial_n^p)\otimes_k
R/J_f$ the decreasing filtration by the vector subspaces $\{W^{\ell}\otimes_kR/J_f\}_{\ell}$, where $W^{\ell}=(\partial_1^{\ell},\ldots,\partial_n^{\ell})/
(\partial_1^p,\ldots,\partial_n^p)$, then for every $g\otimes x^u\in W^{\ell}\otimes_k R/J_f$
we have 
$$(-\partial_tt)\cdot (g\otimes x^u)+\frac{\sum_i(u_i+1)w_i}{d}(g\otimes x^u)
\in W^{\ell+1}\otimes_k R/J_f.$$
This implies that every eigenvalue of $-\partial_tt$ on $T_f$ is of the form $-\frac{\sum_i(u_i+1)w_i}{d}$,
for some monomial $x^u\in R\smallsetminus J_f$.
\end{example}

\bigskip

We end by raising some questions related to the setup considered in this paper.

\begin{question}
The discreteness of the set of $F$-jumping exponents of $f$ is
equivalent with the fact that there is some $r$ such that
$\#\Gamma_f^e\leq r$ for every $e$. The rationality of these
exponents is a direct consequence of their discreteness (see Theorem
3.1 in \cite{BMSm2}). On the other hand, discreteness plus
rationality implies the eventual periodicity of the components of the elements of the sets
$\Gamma_f^e$, when $e$ varies. Is is possible to make a stronger periodicity
statement for the modules $M_f^e$ ?
\end{question}

\begin{question}
In characteristic zero, the main application of the Bernstein-Sato
polynomial in the setting that we discussed is the construction of
the $V$-filtration. Is there an analogue of the $V$-filtration in
positive characteristic ? A related question is the following:
suppose that $\widetilde{f}\in\ZZ[x_1,\ldots,x_n]$. Is it possible to lift the $V$-filtration 
of $\widetilde{f}$ to a filtration on $\ZZ[x_1,\ldots,x_n,t]_{f-y}/\ZZ[x_1,\ldots,x_n,t]$ ? If this is the case,
what can be said about the reduction modulo $p$ of this filtration, for $p\gg 0$ ?
Note that a minimum requirement for the $V$-filtration over $\ZZ$ would be ``to 
put the operator $\partial_t t$ in upper-triangular form". More optimistically, one can ask
about the existence of a structure that would deal at the same time with all operators
$\partial_t^{[m]}t^m$, with $m\geq 1$.
\end{question}

\begin{question}
As in characteristic zero, one can consider the Bernstein-Sato
polynomial of $f$ with respect to an arbitrary element $w\in B_f$. These invariants seem
to be particularly relevant when $w=h\delta$, for some $h\in R$. In
this case they contain the same amount of information as the sets
$$\Gamma_{f,w}^e:=\{(i_1,\ldots,i_e)\in\{0,\ldots,p-1\}^e\mid
D_R^e\bullet hf^{i_1+i_2p+\cdots+i_ep^{e-1}}\neq D_R^e\bullet
hf^{1+i_1+i_2p+\cdots+i_ep^{e-1}}\}.$$ For example, a natural
question is whether the numbers $\#\Gamma_{f,w}^e$ are all bounded
above by some $r$. Moreover, are these numbers eventually constant ?

In characteristic zero, the construction of the $V$-filtration 
is based on the existence of $b_f$ and on the
rationality of its roots. On the other hand, once the existence of the $V$-filtration is known, then 
the existence of all $b_{f,w}$, and the rationality of their roots follow.
Is it possible, in positive characteristic, to use the eventual periodicity
of the components of the elements 
of the sets $\Gamma_f^e$, to prove a similar result about the sets
$\Gamma_{f,w}^e$ ?
\end{question}

\providecommand{\bysame}{\leavevmode \hbox \o3em
{\hrulefill}\thinspace}


\end{document}